\documentclass[a4paper,10pt,reqno]{amsart}

\usepackage[english]{babel}
\usepackage[utf8]{inputenc}
\usepackage{amsmath,amssymb,amsfonts,amsthm,amscd}
\usepackage{color}
\usepackage{graphicx}
\usepackage{enumitem}
\usepackage{hyperref}
\setlist[itemize]{topsep=0.2em, itemsep=0.2em, leftmargin=2em}
\setlist[enumerate]{topsep=0.2em, itemsep=0.2em, leftmargin=2em}
\setlist[description]{topsep=0.2em, itemsep=0.2em, leftmargin=2em}

\newcommand{\df}{\mathrm{d}}

\DeclareMathOperator{\Jac}{Jac}

\DeclareMathOperator{\area}{area}

\DeclareMathOperator{\diver}{div}

\newtheorem{theorem}{Theorem}[section]
\newtheorem{proposition}[theorem]{Proposition}

\newtheorem{lemma}[theorem]{Lemma}

\theoremstyle{definition}
  
	\newtheorem{example}[theorem]{Example}

\theoremstyle{remark}
  \newtheorem{remark}[theorem]{Remark}

\numberwithin{equation}{section}

\numberwithin{equation}{section}
\setlength{\parindent}{1em}
\linespread{1.1}

\title[A duality for prescribed mean curvature graphs]{A duality for prescribed mean curvature graphs in Riemannian and Lorentzian Killing submersions}
\date{}

\author{Andrea Del Prete}
\address{Andrea Del Prete, Dipartimento di Ingegneria e Scienze dell'Informazione e Matematica\\
  Universit\`{a} dell'Aquila, Italy}
\email{andrea.delprete@graduate.univaq.it}

\author{Hojoo Lee}
\address{Hojoo Lee, 
Department of Mathematics and Institute of Pure and Applied Mathematics, Jeonbuk National University, South Korea}
\email{compactkoala@gmail.com, kiarostami@jbnu.ac.kr}
 
\author{Jos\'{e} M. Manzano}
\address{Jos\'{e} M. Manzano, Departamento de Matemáticas, Universidad de Jaén, Spain.}
\email{jmprego@ujaen.es}

\subjclass[2020]{Primary 53A10; Secondary 53C50}

\keywords{Prescribed mean curvature, entire graphs, Killing submersions, critical constant mean curvature}

\selectlanguage{english}

\begin{document}

\begin{abstract}
We develop a conformal duality for spacelike graphs in Riemannian and Lorentzian three-manifolds that admit a Riemannian submersion over a Riemannian surface whose fibers are the integral curves of a Killing vector field, which is timelike in the Lorentzian case. The duality swaps mean curvature and bundle curvature and sends the length of the Killing vector field to its reciprocal while keeping invariant the base surface. We obtain two consequences of this result. On the one hand, we find entire graphs in Lorentz--Minkowski space $\mathbb{L}^3$ with prescribed mean curvature a bounded function $H\in\mathcal{C}^\infty(\mathbb{R}^2)$ with bounded gradient. On the other hand, we obtain conditions for existence and non existence of entire graphs which are related to a notion of critical mean curvature.
\end{abstract}

\maketitle

\section{Introduction}

The mean curvature is one of the most widely studied functionals in Surface Theory in different ambient $3$-manifolds. This quasilinear elliptic operator acquires a divergence form when the surface is a graph in Euclidean space $\mathbb{R}^3$ acting on the function that defines the graph. The literature on questions related to the mean curvature of graphs is too vast to be cited here. However, the starting point of this work is the fact that a minimal graph in Euclidean space has divergence zero and can be transformed it into a maximal (spacelike) graph in Lorentz--Minkowski space $\mathbb{L}^3$ by means of the Poincaré lemma. This clever trick is usually attributed to Calabi~\cite{Calabi}, but the work of Catalan~\cite{Catalan} evidences that the very same argument dates back to the nineteenth century.

The second author~\cite{Lee11a} extended the Calabi duality to the case of homogeneous spaces $\mathbb{E}(\kappa,\tau)$ with isometry group of dimension $4$ (and their Lorentzian counterparts $\mathbb{L}(\kappa,\tau)$). These spaces fiber over $\mathbb{M}^2(\kappa)$, the complete surface of constant curvature $\kappa$, with constant bundle curvature $\tau$ and totally geodesic fibers tangent to a unitary Killing vector field. In this context, the mean curvature equation can be also transformed into a divergence zero equation, and~\cite{Lee11a} uses again Poincaré lemma to get a duality between graphs with constant mean curvature $H$ in $\mathbb{E}(\kappa,\tau)$ and spacelike graphs with constant mean curvature $\tau$ in $\mathbb{L}(\kappa,H)$. The case of minimal surfaces in $\mathbb{S}^2\times\mathbb{R}$ (i.e., the particular case $\kappa=1$ and $\tau=H=0$) was actually discovered earlier by Albujer and Al\'{i}as~\cite{AA}. In~\cite{LeeMan}, the last two authors realized that the same technique can be generalized to $3$-manifolds with unitary Killing vector fields by prescribing non-necessarily constant mean and bundle curvature functions that are swapped by the duality. In the present work, we will move forward to obtain a duality under the presence of any Killing vector field with no zeros, not necessarily of constant length. We believe this is possibly the most general scenario where the mean curvature still acquires a divergence type equation, plus there is a notion of bundle curvature that also admits a divergence type expression.

We will actually consider Riemannian and Lorentzian Killing submersions, which locally model any $3$-manifold with a Killing vector field with no zeros. Such a submersion $\pi:\mathbb{E}\to M$, being $\mathbb{E}$ a simply connected Riemannian or Lorentzian $3$-manifold, is determined by the simply connected Riemannian base surface $M$ and two geometric functions: the bundle curvature $\tau$ and the Killing length $\mu$, both of which can be prescribed arbitrarily (being the latter positive). This was proved by Lerma and the third author~\cite{LerMan} (see Theorem~\ref{thm:Killing-classification}). The total spaces of these submersions will be therefore denoted by $\mathbb{E}(M,\tau,\mu)$ and $\mathbb{L}(M,\tau,\mu)$ depending on whether they are Riemannian or Lorentzian. Note that $\mathbb{E}(\kappa,\tau)=\mathbb{E}(\mathbb{M}^2(\kappa),\tau,1)$ and $\mathbb{L}(\kappa,\tau)=\mathbb{L}(\mathbb{M}^2(\kappa),\tau,1)$, but also these spaces contain the warped products $M\times_\mu\mathbb{R}$ (which in turn contain the Riemannian or Lorentzian products $M\times\mathbb{R}$ whenever $\mu\equiv 1$) and all homogeneous three manifolds, which admit many Killing submersion structures depending on the choice of the Killing vector field. 

There is a natural notion of graph in $\mathbb{E}(M,\tau,\mu)$ and $\mathbb{L}(M,\tau,\mu)$ defined as a section of the submersion over $M$. Our main result (Theorem~\ref{thm:duality}) is a conformal duality between entire graphs in $\mathbb{E}(M,\tau,\mu)$ with mean curvature $H$ and entire spacelike graphs in $\mathbb{L}(M,H,\mu^{-1})$ with mean curvature $\tau$. This is a very general result that covers all previously known cases since $M$ is an arbitrary simply connected surface and $H,\tau,\mu\in\mathcal{C}^\infty(M)$ are also arbitrary (such that $\mu>0$). 

This yields a geometric connection between two apparently different theories which helps understand some geometric features. For instance, Fernández and Mira's classification~\cite{FM} of entire minimal graphs in Heisenberg space $\mathrm{Nil}_3=\mathbb{E}(0,\frac{1}{2})$ becomes transparent by considering the dual entire spacelike graphs in $\mathbb{L}^3$ with constant mean curvature $\frac{1}{2}$, see~\cite{Man19}. Also, the third author and Nelli~\cite{MN} showed that gradient estimates for entire minimal graphs in $\mathrm{Nil}_3$ are related to the Cheng and Yau's estimates~\cite{CY2} for the dual graphs in $\mathbb{L}^3$. In~\cite{LerMan}, the duality was used to show the existence of entire minimal graphs in Riemannian Killing submersions over compact surfaces using the existence results for prescribed mean curvature graphs in Lorentzian warped products obtained by Gerhardt~\cite{Ger}. In~\cite{LeeMan}, the duality revealed that many Lorentzian Killing submersions do not admit any complete spacelike surface by an extension of a classical argument of Heinz~\cite{He55} for constant mean curvature graphs in the Riemannian setting (see also Theorem~\ref{thm:NON}).

In this paper, as a first application of the duality, we will obtain entire spacelike graphs in Lorentz--Minkowski space $\mathbb{L}^3=\mathbb{L}(\mathbb{R}^2,0,1)$ with bounded prescribed mean curvature $H\in\mathcal{C}^\infty(\mathbb{R}^2)$ such that $\nabla H$ is also bounded, see Theorem~\ref{thm:L3}. This is achieved by constructing the dual entire minimal graphs in $\mathbb{E}(\mathbb{R}^2,H,1)$ using the theory of divergence lines, developed by Mazet, Rosenberg and Rodríguez~\cite{MRR} and adapted to the case of Killing submersions by Nelli and the first and third authors~\cite{DMN}. In our proof, we have extended some of the results in~\cite{DMN} to take limits in $3$-manifolds whose geometry is not necessarily bounded by a diagonal argument with respect to an exhaustion by relatively compact domains. In $\mathbb{E}(\mathbb{R}^2,H,1)$, we discard the possible divergence lines by applying Mazet's halfspace theorem~\cite{Mazet}, and it is precisely at this point where we use that $H$ and $\nabla H$ are bounded. 

In particular, we give a partial answer to a conjecture stated in~\cite{LeeMan} that there are entire graphs in $\mathbb{L}^3$ with any prescribed mean curvature $H\in\mathcal{C}^\infty(\mathbb{R}^2)$. We also prove this conjecture in Lorentzian warped products $\mathbb{E}(M,0,\mu)$ in which $M$, $\mu$ and $H$ are all invariant by rotations or translations with no assumptions on their growth, see Proposition~\ref{prop:rotations-translations}. This means that our hypothesis in Theorem~\ref{thm:L3} are not sharp. In higher dimensions, this problem has been discussed in the literature as related to the Born--Infeld equation in which the mean curvature plays the role of the density of charge of an electrostatic physical system, and a solution is usually required to vanish at infinity (e.g., see~\cite{BDP,BIMM} and the references therein). In our approach, we are able to prescribe the normal at any point of the base by means of a degree argument, see Lemma~\ref{lem:angle-1} and Remark~\ref{rmk:prescribed-normal}.

Our second application of the duality is about the non-existence of entire graphs. In theorem~\ref{thm:NON}, we prove that $\mathbb{E}(M,\tau,\mu)$ does not admit any entire graph with $\inf_M|H|>\frac{1}{2}\mathrm{Ch}(M,\mu)$ and the dual statement that $\mathbb{L}(M,\tau,\mu^{-1})$ does not admit complete spacelike surfaces (of any mean curvature) if $\inf_M|\tau|>\frac{1}{2}\mathrm{Ch}(M,\mu)$. Here, $\mathrm{Ch}(M,\mu)$ is a constant that we have named \emph{Cheeger constant with density} $\mu$, see Equation~\eqref{eqn:cheeger}. Theorem~\ref{thm:NON} had already been proved in~\cite{LeeMan} in the unitary case $\mu\equiv 1$, in which $\mathrm{Ch}(M,\mu)$ is the classical Cheeger constant. In the case of the homogeneous $\mathbb{E}(\kappa,\tau)$-spaces, the value $H_0=\frac{1}{2}\mathrm{Ch}(M,\mu)$ is the so-called \emph{critical mean curvature}. If $H\leq H_0$, then there are entire graphs with constant mean curvature $H$ in $\mathbb{E}(\kappa,\tau)$; on the contrary, if $H>H_0$, then there are compact bigraphs with constant mean curvature $H$. This dichotomy plays a crucial role in the solution of the Hopf problem in homogeneous $3$-manifolds, see~\cite{MMPR}. Motivated by this fact, we have investigated if $H_0=\frac{1}{2}\mathrm{Ch}(M,\mu)$ distinguishes the existence of entire graphs and compact surfaces in $\mathbb{E}(M,\tau,\mu)$. In Theorem~\ref{thm:rotational}, we solve completely this problem in any rotationally invariant Riemannian warped product $\mathbb{E}(M,0,\mu)$. Remarkably, we find that some specific values of $H>H_0$ give rise to rotationally invariant nonentire complete graphs, which we call $H$-\emph{cigars}, see Figure~\ref{fig:cigar}. We believe that the constant $\frac{1}{2}\mathrm{Ch}(M,\mu)$ is related to the critical mean curvature in all homogeneous $3$-manifolds for any of their (many) Killing submersion structures.

We remark that all the rotational examples we have obtained in $\mathbb{E}(M,0,\mu)$ or $\mathbb{L}(M,0,\mu)$ in the proofs of Proposition~\ref{prop:rotations-translations} and Theorem~\ref{thm:rotational} have been obtained by means of the duality. It is hard to get a direct solution of the associated \textsc{ode} since we recall that $M$, $\mu$ and $H$ are arbitrary (rotationally symmetric) objects. It is also important to mention that Theorem~\ref{thm:L3} uses strongly the duality since we it transform the prescribed mean curvature problem in the Lorentzian setting into a problem for minimal graphs in the Riemannian setting, where there are many more results that come in handy to analyze convergence.

The paper is organized as follows. In Section~\ref{sec:preliminaries}, we collect some results about Riemannian and Lorentzian Killing submersions. To compute the mean curvature of Killing graphs, we use a coordinate free approach based on the calculus of variations, but also consider graphs in coordinates, which is quite helpful when applying the duality. We would like to highlight that the curvature tensor of a Killing submersion is also computed (see Proposition~\ref{prop:curvature}). Section~\ref{sec:duality} is devoted to prove the duality. In Section~\ref{sec:L3}, we give the first application to the existence of entire graphs in $\mathbb{L}^3$; in Section~\ref{sec:non}, we give the existence and non-existence results concerning the Cheeger constant and the critical mean curvature.

\medskip
\noindent\textbf{Acknowledgement.} 
The first author is supported by a INdAM--GNSAGA grant (codice CUP\_E55F22000270001). The third author is supported by the Ramón y Cajal fellowship RYC2019-027658-I and the project PID2019.111531GA.I00, both funded by MCIN/AEI/10.13039/501100011033, as well as by the FEDER-UJA project No.\ 1380860.

\section{Preliminaries on Killing graphs}\label{sec:preliminaries}

Let $\pi:\mathbb{E}\to M$ be a differentiable submersion from a Riemannian or Lorentzian $3$-manifold $\mathbb{E}$ onto a  Riemannian surface $M$, both of them connected and orientable. A tangent vector $v$ will be called \textit{vertical} when $v\in\ker(\df\pi)$ and \textit{horizontal} when $v\in\ker(\df\pi)^\bot$. The submersion $\pi$ is called a Riemannian (resp.\ Lorentzian) Killing submersion if $\mathbb{E}$ is Riemannian (resp.\ Lorentzian), $\pi$ preserves the length of horizontal vectors, and the fibers of $\pi$ are the integral curves of a complete Killing vector field $\xi\in\mathfrak{X}(\mathbb{E})$ with no zeros. In the Lorentzian case, this implies that $\xi$ is a timelike vector field. The $1$-parameter group of isometries associated to $\xi$ will be denoted by $\{\phi_t\}_{t\in\mathbb{R}}$, and its elements will be called \textit{vertical translations}. Note that $\phi_t$ is defined for all values of $t\in\mathbb{R}$ since we have assumed $\xi$ is complete.

In this setting, we can consider the \emph{connection $1$-form} $\alpha$ in $\mathbb{E}$ defined as $\alpha(X)=\langle X,\xi\rangle$, whose differential is given by $\df\alpha(X,Y)=\langle\nabla_X\xi,Y\rangle$ for all vector fields $X$ and $Y$, and it is known as \emph{curvature $2$-form} in $\mathbb{E}$. The fact that $\xi$ is Killing implies that $\df\alpha$ is skew-symmetric, so the function $\tau\in\mathcal{C}^\infty(\mathbb{E})$ given by
\begin{equation}\label{eqn:tau}
\tau(q)=\frac{1}{\|\xi_q\|^2}\,\df\alpha(v,v\times\xi),\quad q\in\mathbb{E},
\end{equation}
depends neither on the choice of a horizontal vector $v\in T_q\mathbb{E}$ with $\|v\|=1$ nor on rescaling $\xi$ by a constant factor. Here, $\times$ stands for the cross product in $\mathbb{E}$ defined such that $\langle u\times v,w\rangle=\det_{\mathbb B}(u,v,w)$ whenever $u,v,w\in T_q\mathbb{E}$ are expressed in coordinates as column vectors in a positively oriented orthonormal basis $\mathbb B$.

The function $\tau$ is called the bundle curvature of the Killing submersion and is constant along the fibers of $\pi$ due to the fact that each $\phi_t$ is an isometry of $\mathbb{E}$ such that $(\phi_t)_*\xi=\xi$ and, in particular, $(\phi_t)_*\omega=\omega$. Moreover, we will also consider the Killing length $\mu=|\langle\xi,\xi\rangle|^{1/2}\in\mathcal{C}^\infty(\mathbb{E})$, which is a positive function also constant along fibers. It follows that $\tau$ and $\mu$ can be viewed as functions in the base surface $M$, which will be also denoted by $\tau,\mu\in\mathcal{C}^\infty(M)$.

\begin{theorem}[{\cite{LerMan}}]\label{thm:Killing-classification}
Let $M$ be a simply connected Riemannian surface and let $\tau,\mu\in\mathcal{C}^\infty(M)$ such that $\mu>0$. Then, there exists a Riemannian (resp.\ Lorentzian) Killing submersion $\pi:\mathbb{E}\to M$ such that
\begin{enumerate}[label=\emph{(\arabic*)}]
 \item $\mathbb{E}$ is simply connected,
 \item $\tau$ is the bundle curvature of $\pi$, and
 \item $\mu$ is the length of a Killing field $\xi$ whose integral curves are the fibers of $\pi$.
\end{enumerate}
Moreover, $\pi:\mathbb{E}\to M$ is unique in the sense that if $\pi':\mathbb{E}'\to M$ is another Riemannian (resp.\ Lorentzian) Killing submersion satisfying conditions (1), (2) and (3) above, then there exists an isometry $T:\mathbb{E}\to\mathbb{E}'$ such that $\pi'\circ T=\pi$.
\end{theorem}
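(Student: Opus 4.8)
The plan is to construct $\mathbb{E}$ as the total space of a line or circle bundle over $M$ carrying a canonical metric, and then to read off the prescribed data from a principal connection. First I would fix a sign $\epsilon=+1$ in the Riemannian case and $\epsilon=-1$ in the Lorentzian case, and look for a metric of the form
\begin{equation*}
g=\pi^*g_M+\epsilon\,\mu^2\,\omega\otimes\omega,
\end{equation*}
where $g_M$ is the metric of $M$ and $\omega$ is a connection $1$-form normalized by $\omega(\xi)=1$, so that $\ker\omega$ is the horizontal distribution and $\xi$ spans the vertical one. By construction $\pi$ preserves the length of horizontal vectors and $\|\xi\|=\mu$; moreover, since $g_M$, $\mu$ and $\omega$ are all invariant under the flow $\{\phi_t\}$ generated by $\xi$ (none of them depends on the fiber coordinate), the field $\xi$ is Killing and its integral curves are the fibers, which settles conditions (1)--(3) except for matching the bundle curvature. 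A direct computation using \eqref{eqn:tau} with $\alpha=\epsilon\mu^2\omega$ shows that the bundle curvature of this metric is controlled by $\df\omega$; concretely one finds $\df\omega=\frac{\tau}{\mu}\,\pi^*\mathrm{dA}_M$, where $\mathrm{dA}_M$ is the area element of $M$. Hence the whole problem reduces to producing a connection $\omega$ on a suitable bundle over $M$ whose curvature is this prescribed basic $2$-form.

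Here the topology of the simply connected surface $M$ enters. If $M$ is contractible (diffeomorphic to $\mathbb{R}^2$), then every $2$-form on $M$ is exact by the Poincaré lemma, so writing $\frac{\tau}{\mu}\,\mathrm{dA}_M=\df\eta$ for a global $1$-form $\eta$ and setting $\omega=\df t+\pi^*\eta$ on the trivial bundle $\mathbb{E}=M\times\mathbb{R}$ does the job, the total space being simply connected with fibers $\mathbb{R}$. If instead $M$ is diffeomorphic to $\mathbb{S}^2$, the basic $2$-form need not be exact, and its obstruction is the single number $c=\int_M\frac{\tau}{\mu}\,\mathrm{dA}_M$. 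When $c=0$ one again uses the trivial bundle $\mathbb{S}^2\times\mathbb{R}$; when $c\neq 0$ one takes the nontrivial principal circle bundle over $\mathbb{S}^2$ with Euler number $\pm 1$ (whose total space is the simply connected $\mathbb{S}^3$) and chooses the period of $\xi$ equal to $|c|$, so that the Chern--Weil constraint $\int_M\df\omega=(\text{period})\cdot(\text{Euler number})$ is met and a connection with the required curvature exists. In every case the resulting $\mathbb{E}$ is simply connected, and one checks directly that $\tau$ and $\mu$ are indeed the bundle curvature and the Killing length.

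For uniqueness, suppose $\pi:\mathbb{E}\to M$ and $\pi':\mathbb{E}'\to M$ both realize $(M,\tau,\mu)$. Each is locally isometric to the standard model above, so it suffices to build a global fiber-preserving isometry $T$ with $\pi'\circ T=\pi$. I would compare the two connection forms: pulling $\omega'$ back along a bundle identification covering $\mathrm{id}_M$, the difference $\omega'-\omega$ is a closed basic $1$-form, because both connections have the same curvature $\frac{\tau}{\mu}\,\pi^*\mathrm{dA}_M$; since $M$ is simply connected this difference is exact, and its primitive is exactly the gauge function needed to define $T$ fiberwise. Simple connectivity of $M$ guarantees that this gauge function is single valued and that $T$ is globally well defined, and the common expression of the two metrics in terms of $g_M$, $\mu$ and the connection shows that $T$ is an isometry.

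The main obstacle is the global realization over $M=\mathbb{S}^2$: the prescribed curvature $\frac{\tau}{\mu}\,\mathrm{dA}_M$ has an a priori arbitrary total integral $c$, whereas the curvature of a principal connection on a circle bundle is quantized through the integer Euler number. Reconciling the continuous datum $c$ with this discreteness---by allowing closed fibers and tuning their period to absorb $c$---is the delicate step, and it is also what forces one to treat the contractible and spherical cases separately. The remaining verifications (that $\xi$ is Killing, that \eqref{eqn:tau} returns the prescribed $\tau$, and that the gauge argument globalizes) are routine once the bundle is in place.
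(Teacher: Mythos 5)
This theorem is imported verbatim from \cite{LerMan}; the present paper never proves it, and the closest it comes is the coordinate model of Section~\ref{sec:graphs-coordinates} together with Remark~\ref{rmk:calabi-potential}. Your outline is, in substance, the proof from the cited source rewritten in the invariant language of principal connections: the ansatz $g=\pi^*g_M+\epsilon\mu^2\,\omega\otimes\omega$ turns the prescription of $\tau$ into the prescription of $\df\omega$; the Poincar\'{e} lemma solves this over a contractible base, which is exactly what the Calabi potential \eqref{eqn:calabi-potential}--\eqref{eqn:calabi-ab} does by explicit radial integration; the sphere is handled by the trivial bundle or by the Hopf bundle with rescaled fiber according to whether $\int_M\tau/\mu$ vanishes, which is \cite[Thm~2.9]{LerMan}, the very case invoked in the proof of Theorem~\ref{thm:duality}; and uniqueness is the change-of-zero-section (gauge) argument mentioned at the end of Remark~\ref{rmk:calabi-potential}. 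So the route is essentially the same, and the only difference is presentational: your formulation makes the $\mathbb{S}^2$ quantization transparent, while the paper's makes the metric explicit for computations. Two details in your write-up need repair. First, with the conventions \eqref{eqn:tau} and \eqref{eqn:tau-model} the curvature identity is $\df\omega=-\tfrac{2\epsilon\tau}{\mu}\,\pi^*(\df A_M)$, not $\tfrac{\tau}{\mu}\,\pi^*(\df A_M)$; the factor $-2\epsilon$ is harmless for the structure of the argument, but it changes the fiber period you must assign over $\mathbb{S}^2$. Second, your uniqueness step silently uses that (a) \emph{every} Killing submersion with data $(M,\tau,\mu)$ has its metric in the canonical form above, with $\omega=\epsilon\mu^{-2}\langle\cdot,\xi\rangle$ invariant under the flow (true, by splitting vectors into vertical and horizontal parts), and (b) the two total spaces can be identified as bundles covering $\mathrm{id}_M$, which in the closed-fiber case requires all fibers to share a common period; this holds because $\pi\circ\phi_t=\pi$ (the flow preserves each fiber), so any $\phi_T$ fixing one point has identity differential there and is therefore the identity of $\mathbb{E}$. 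With these two points supplied, your proof is correct.
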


Accordingly, given a simply connected surface $M$ and $\tau,\mu\in\mathcal{C}^\infty(M)$ such that $\mu>0$, the unique simply connected Riemannian (resp.\ Lorentzian) $3$-manifold which admits a Killing submersion over $M$ with bundle curvature $\tau$ and Killing length $\mu$ will be denoted by $\mathbb{E}(M,\tau,\mu)$ (resp. $\mathbb L(M,\tau,\mu)$).

\subsection{The mean curvature of a Killing graph}

A (Killing) graph in a Killing submersion $\pi:\mathbb{E}\to M$ is a smooth section over an open subset $U\subset M$. If we prescribe a smooth zero section $F_0:U\to\mathbb{E}$, then such a graph can be parametrized as $F_u:U\to\mathbb{E}$ with $F_u(p)=\phi_{u(p)}(F_0(p))$ for some $u\in\mathcal{C}^\infty(U)$, where $\{\phi_t\}$ is the group of vertical translations. In the sequel, we will assume that the fibers of $\pi$ have infinite length, which implies the existence of global smooth sections, see~\cite{LerMan}.

Given $u\in\mathcal{C}^\infty(U)$, we will denote the graph spanned by $F_u$ by $\Sigma_u$, which will be assumed \emph{spacelike}, i.e., the restriction of the metric of $\mathbb{E}$ is positive definite. Following the ideas in~\cite{LerMan}, we will consider the functions $\overline u\in\mathcal{C}^\infty(\mathbb{E})$ defined by $\overline u=u\circ\pi$ and $d\in\mathcal{C}^\infty(\mathbb{E})$ defined implicitly by $\phi_{d(q)}(F_0(\pi(q)))=q$, i.e., $d(q)$ is the signed distance along a fiber from the initial section to $q$. Therefore, the upward pointing unit normal to $\Sigma_u$ can be expressed as $N=\epsilon\overline\nabla(d-\overline u)/\|\overline\nabla(d-\overline u)\|_{\mathbb{E}}$, where $\overline\nabla$ and $\|\cdot\|_{\mathbb{E}}$ stand for the gradient and norm in $\mathbb{E}$, respectively. 

Note that $\langle\overline\nabla d,\xi\rangle=\xi(d)=1$ by definition of $d$ and $\langle\overline\nabla \overline u,\xi\rangle=0$ since $\overline u$ is constant along the fibers of $\pi$. Therefore, we can decompose in vertical and horizontal components $\overline\nabla(d-\overline u)=\frac{\epsilon}{\mu^2}\xi+(\overline\nabla(d-\overline u))^h$. It follows from the orthogonality of the vertical and horizontal components that 
\begin{equation}\label{eqn:causality}
\|\overline\nabla(d-\overline u)\|_{\mathbb{E}}^2=\tfrac{\epsilon}{\mu^2}+\|(\overline\nabla(d-\overline u))^h\|_{\mathbb{E}}^2=\tfrac{\epsilon}{\mu^2}+\|\nabla u-Z\|^2,
\end{equation}
where $Z=\pi_*(\overline\nabla d)$ is a vector field on $U\subset M$ not depending on $u$. Here, $\nabla$ and $\|\cdot\|$ denote the gradient and norm in $M$, respectively. We also define $Gu=\nabla u-Z$, usually known as the \emph{generalized gradient} of $u$, see~\cite{LerMan,DMN}. Observe that $\overline\nabla(d-\overline u)$ is timelike in the Lorentzian case ($\epsilon=-1$), which amounts to saying that the right-hand side in~\eqref{eqn:causality} is negative, i.e., the spacelike condition is equivalent to $1+\epsilon\mu^2\|Gu\|^2>0$. This also means that we have to add a factor $\epsilon$ before taking square roots to get rid of the square in the left-hand side of~\eqref{eqn:causality}. Consequently, the angle function $\nu=\langle N,\xi\rangle$ of $\Sigma_u$ can be computed as
\begin{equation}\label{eqn:angle-function}
\nu=\frac{\epsilon\langle\overline\nabla(d-\overline u),\xi\rangle}{\|\overline\nabla(d-\overline u)\|}=\frac{\epsilon\mu}{\sqrt{1+\epsilon\mu^2\|Gu\|^2}}.
\end{equation}
Note that $0<\nu\leq\mu$ if $\epsilon=1$, whereas $\nu\leq-\mu$ if $\epsilon=-1$.

Since $\Sigma_u$ is a section of $\pi$, the projection $\pi|_{\Sigma_u}:\Sigma_u\to U$ is a diffeomorphism and the area element of $\Sigma_u$ over $U$ can be computed as the Jacobian of $\pi|_{\Sigma_u}$. Let $\{v_1,v_2\}$ be an orthonormal basis of $T_q\Sigma_u$ at some $q\in\Sigma_u$ such that $v_1$ is horizontal, and let $h\in T_q\mathbb{E}$ be an horizontal unit vector such that $\{v_1,h\}$ is also orthonormal. Since $\xi$, $N$, $v_2$ and $h$ are coplanar (all of them are orthogonal to $v_1$), we can easily express $N=\epsilon\frac{\nu}{\mu^2}\xi\pm \frac{1}{\mu}\sqrt{\epsilon(\mu^2-\nu^2)}h$ and then work out the orthogonal vector $v_2=\frac{1}{\mu^2}\sqrt{\epsilon(\mu^2-\nu^2)}\xi\mp\frac{\nu}{\mu} h$, where the signs depend on the choice of $h$. Since $\pi$ is a Riemannian submersion, we deduce that $\{v_1,v_2\}$ projects to an orthogonal basis $\{\df\pi_q(v_1),\df\pi_q(v_2)\}$ such that $\|\df\pi_q(v_1)\|=1$ and $\|\df\pi_q(v_2)\|=\frac{|\nu|}{\mu}$. This implies that 
\begin{equation}\label{eqn:jacobian-projection}
|\Jac(\pi|_{\Sigma_u})|=\frac{|\nu|}{\mu}=\frac{1}{\sqrt{1+\epsilon\mu^2\|Gu\|^2}}.
\end{equation}
For each relatively compact subdomain $\Omega\subset\overline\Omega\subset U$, a direct change of variables using~\eqref{eqn:jacobian-projection} yields the desired area element:
\begin{equation}\label{eqn:area-element}\area(\Sigma_u\cap\pi^{-1}(\Omega))=\int_\Omega\sqrt{1+\epsilon\mu^2\|Gu\|^2}.
\end{equation}

\begin{proposition}\label{prop:mean-curvature}
The mean curvature of a Killing graph parametrized by a function $u\in\mathcal{C}^2(U)$ under the above assumptions is given by
\[H=\frac{1}{2\mu}\diver\left(\frac{\mu^2 Gu}{\sqrt{1+\epsilon\mu^2\|Gu\|^2}}\right),\]
where the divergence is computed in $M$.
\end{proposition}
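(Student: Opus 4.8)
The plan is to realize $H$ as the Euler--Lagrange operator of the area functional \eqref{eqn:area-element}, using that the vertical translations generate exactly the normal variations whose velocity is governed by the angle function. Fix a relatively compact $\Omega\subset U$ and a test function $\psi\in\mathcal{C}^\infty_c(\Omega)$, and consider the variation $u_s=u+s\psi$. Since $Z$ is independent of $u$, one has $Gu_s=Gu+s\nabla\psi$, so the integrand in \eqref{eqn:area-element} is smooth in $s$ and differentiation under the integral sign gives
\[\frac{\df}{\df s}\Big|_{s=0}\area(\Sigma_{u_s}\cap\pi^{-1}(\Omega))=\int_\Omega\frac{\epsilon\mu^2\langle Gu,\nabla\psi\rangle}{\sqrt{1+\epsilon\mu^2\|Gu\|^2}}.\]
Integrating by parts on $M$, using that $\psi$ has compact support and that $\epsilon=\pm1$ is a constant, this derivative equals $-\epsilon\int_\Omega\psi\,\diver\big(\mu^2 Gu/\sqrt{1+\epsilon\mu^2\|Gu\|^2}\big)$.

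The second step is to identify the same derivative geometrically. The variation $F_{u_s}(p)=\phi_{u_s(p)}(F_0(p))$ has velocity field $\partial_sF_{u_s}|_{s=0}=\psi\,\xi$, whose component along the unit normal $N$ is $\psi\langle\xi,N\rangle=\psi\nu$ by definition of the angle function in \eqref{eqn:angle-function}. The first variation of the (Riemannian) area of the spacelike graph therefore expresses this derivative as $-\int_{\Sigma_u}2H\,\psi\nu\,\df A_{\Sigma}$, where $\df A_{\Sigma}$ is the induced area element. Converting it to the base measure through the Jacobian \eqref{eqn:jacobian-projection}, namely $\df A_{\Sigma}=(\mu/|\nu|)\,\df A_M$, the surface integral becomes $-\int_\Omega 2H\,\psi\,\mu\,(\nu/|\nu|)\,\df A_M$.

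Equating the two expressions and letting $\psi$ range over all of $\mathcal{C}^\infty_c(\Omega)$ yields the pointwise identity
\[\diver\Big(\frac{\mu^2 Gu}{\sqrt{1+\epsilon\mu^2\|Gu\|^2}}\Big)=2H\mu\cdot\frac{\epsilon\,\nu}{|\nu|},\]
and the claimed formula follows once the sign factor $\epsilon\,\nu/|\nu|$ is evaluated. This last evaluation is the delicate point and the main obstacle I anticipate: by the remark following \eqref{eqn:angle-function} one has $\nu>0$ when $\epsilon=1$ and $\nu\le-\mu<0$ when $\epsilon=-1$, so that $\epsilon\,\nu/|\nu|=1$ in both cases and the formula becomes uniform. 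Making this rigorous requires fixing a consistent convention for the scalar mean curvature relative to $N$ and for the first variation of area in the Lorentzian case, where $N$ is timelike and the normal bundle is negative definite; one must verify that the factor $\epsilon$ produced by the differentiation of the integrand and the sign of $\nu$ conspire to cancel, leaving a single expression valid for $\epsilon=\pm1$. The remaining steps—the differentiation, the integration by parts, and the change of variables via \eqref{eqn:jacobian-projection}—are routine.
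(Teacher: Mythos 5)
Your proposal is correct and takes essentially the same route as the paper: both compute the first variation of the area functional \eqref{eqn:area-element} in two ways---analytically, by differentiating under the integral and applying the divergence theorem, and geometrically, via the normal component of the variation field $\psi\xi$---and then equate the two for all test functions. The sign issue you flag as the delicate point is handled in the paper by citing \cite[Lem.~3.1]{BarbosaOliker} for a first-variation formula valid in both signatures and by substituting the signed identity $\nu=\epsilon\mu/\sqrt{1+\epsilon\mu^2\|Gu\|^2}$ from \eqref{eqn:angle-function} directly (rather than splitting off $|\nu|$ as you do), so the factors of $\epsilon$ cancel without any case analysis.
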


\begin{proof}
Let $f\in\mathcal{C}^\infty_0(U)$ be a smooth function that vanishes outside a relatively compact open subset $\Omega\subset\overline\Omega\subset U$, and consider the functional $A_f(t)=\area(\Sigma_{u+tf}\cap\pi^{-1}(\Omega))$. It follows from~\eqref{eqn:area-element} and the divergence theorem that
\begin{align}
A_f'(0)&=\int_\Omega\left.\frac{\df}{\df t}\right|_{t=0}\!\!\!\!\sqrt{1+\epsilon\mu^2\|G(u+tf)\|^2}=\int_\Omega\left.\frac{\df}{\df t}\right|_{t=0}\!\!\!\!\sqrt{1+\epsilon\mu^2\|Gu+t\nabla f\|^2}\notag\\
&=\int_\Omega\frac{\epsilon\mu^2\langle Gu,\nabla f\rangle}{\sqrt{1+\epsilon\mu^2\|Gu\|^2}}=-\int_\Omega \epsilon f\diver\left(\frac{\mu^2Gu}{\sqrt{1+\epsilon\mu^2\|Gu\|^2}}\right).\label{eqn:first-variation1}
\end{align}
Moreover, since the associated variational field of this graphical variation is just $\xi$, it is well known (e.g., see~\cite[Lem.~3.1]{BarbosaOliker}) that in both the Riemannian and Lorentzian cases, the first variation of the area functional is also given by
\begin{equation}\label{eqn:first-variation2}
A_f'(0)=-\int_{\Sigma_u\cap\pi^{-1}(\Omega)}2H\langle N,\xi\rangle=-\int_{\Sigma_u\cap\pi^{-1}(\Omega)}\frac{2H\epsilon\mu f}{\sqrt{1+\epsilon\mu^2\|Gu\|^2}}=-\int_\Omega2H\epsilon\mu f.
\end{equation}
Since~\eqref{eqn:first-variation1} and~\eqref{eqn:first-variation2} must agree for all compactly supported functions $f\in\mathcal{C}^\infty_0(U)$, the formula in the statement follows readily.
\end{proof}

\subsection{Working in coordinates}\label{sec:graphs-coordinates}

Consider a Riemannian or Lorentzian Killing submersion $\pi:\mathbb{E}\to M$ where $M$ is a topological disk parametrized globally by $\varphi:\Omega\subset\mathbb{R}^2\to M$ such that the pullback metric of $M$ by $\varphi$ reads $\lambda_1^2\,\df x^2+\lambda_2^2\,\df y^2$ for some positive $\lambda_1,\lambda_2\in\mathcal{C}^\infty(\Omega)$. This can be achieved by taking conformal coordinates, but coordinates which are just orthogonal are much easier to find in practical examples (and lead to the same results).

As the fibers of $\pi$ are assumed to have infinite length, the Killing submersion $\pi$ automatically admits a global section $F_0:M\to\mathbb{E}$, so that
\[\Psi:\Omega\times\mathbb{R}\to\mathbb{E},\quad \Psi((x,y),z)=\phi_z(F_0(\varphi^{-1}(x,y)))\] 
is a global diffeomorphism. Hence, $\mathbb{E}$ can be modeled as $\Omega\times\mathbb{R}$ with the metric that makes $\Psi$ an isometry. The orthonormal frame $\{e_1=\lambda_1^{-1}\partial_x,e_2=\lambda_2^{-1}\partial_y\}$ in $\Omega$ lifts via $\pi$ to a global frame $\{E_1,E_2\}$ of the horizontal distribution, which together with $E_3=\frac{1}{\mu}\partial_z$ forms a global orthonormal frame of $\mathbb{E}$. The fact that the submersion is the projection over the first factor implies that there exist $a,b\in\mathcal{C}^\infty(\Omega)$ such that
\begin{equation}
\begin{aligned}\label{eqn:frame}
(E_1)_{(x,y,z)}&=\tfrac{1}{\lambda_1(x,y)}\,\partial_x+a(x,y)\partial_z,\\
(E_2)_{(x,y,z)}&=\tfrac{1}{\lambda_2(x,y)}\,\partial_y+b(x,y)\partial_z,\\
(E_3)_{(x,y,z)}&=\tfrac{1}{\mu(x,y)}\,\partial_z.
\end{aligned}
\end{equation}
Note that $E_1$ and $E_2$ are spacelike, whereas $E_3$ is spacelike in the Riemannian case and timelike in the Lorentzian case, and $\xi=\partial_z=\mu E_3$ is the Killing vector field. Therefore, the ambient metric in $\mathbb{E}$ can be written as
\begin{equation}\label{eqn:ambient-metric}
\df s^2=\lambda_1^2\df x^2+\lambda_2^2\df y^2+\epsilon\mu^2\left(\df z-\lambda_1a\df x-\lambda_2b\df y\right)^2.
\end{equation}
We will further assume that $\{E_1,E_2,E_3\}$ is positively oriented in $\mathbb{E}$, whence the bundle curvature $\tau$ can be expressed in terms of $a$ and $b$ as
\begin{equation}\label{eqn:tau-model}
\begin{aligned}
\tau&=\tfrac{1}{\mu^2}\langle\nabla_{E_1}\partial_z,E_1\times\partial_z\rangle=-\tfrac{1}{\mu}\langle\nabla_{E_1}\partial_t,E_2\rangle=\langle\nabla_{E_1}E_2,E_3\rangle\\
&=\tfrac{1}{2}\langle[E_1,E_2],E_3\rangle=\tfrac{\epsilon\mu}{2\lambda_1\lambda_2}\left((\lambda_2 b)_x-(\lambda_1 a)_y\right),
\end{aligned}
\end{equation}
where we used the definition of $\tau$ in Equation~\eqref{eqn:tau}, Koszul formula and the following Lie brackets that can be easily deduced from~\eqref{eqn:frame}:
\begin{equation}\label{eqn:bracket}
\begin{aligned}
[E_1,E_2]&=\tfrac{(\lambda_1)_y}{\lambda_1\lambda_2}E_1-\tfrac{(\lambda_2)_x}{\lambda_1\lambda_2}E_2+\tfrac{\mu}{\lambda_1\lambda_2}\left((\lambda_2 b)_x-(\lambda_1 a)_y\right)E_3,\\
[E_1,E_3]&=\tfrac{-\mu_x}{\lambda_1\mu}E_3,\qquad\qquad [E_2,E_3]=\tfrac{-\mu_y}{\lambda_2\mu}E_3.
\end{aligned}\end{equation}

\begin{remark}\label{rmk:calabi-potential}
If $\tau$ and $\mu$ are prescribed, there is a standard way of integrating~\eqref{eqn:tau-model} to obtain $a$ and $b$. Assuming that $\Omega\subset\mathbb{R}^2$ is star-shaped with respect to the origin, the function
\begin{equation} \label{eqn:calabi-potential}
 \mathbf{C}_{M,\tau,\mu}(x,y)
  = 2\int_{0}^{1}  \; s\, \frac{ \tau(sx,sy) \, {\lambda}_{1}(sx,sy) \, {\lambda}_{2}(sx,sy)}
  { {\mu(sx,sy)}} \df s
\end{equation}
will be called the \emph{Calabi potential}. It was originally defined in the unitary case~\cite{Man14,LeeMan} and generalized in~\cite[Eqn.~2.8]{LerMan}. Here we will extend it to consider orthogonal (not necessarily conformal) coordinates in the base. It is straightforward to check that the following choice for $a$ and $b$ satisfies Equation~\eqref{eqn:tau-model}:
\begin{align} \label{eqn:calabi-ab}
a&=\frac{-\epsilon y\, \mathbf{C}_{M,\tau,\mu}}{\lambda_1},&
b&=\frac{\epsilon x\, \mathbf{C}_{M,\tau,\mu}}{\lambda_2}.
\end{align}
Any other pair of functions $a$ and $b$ satisfying~\eqref{eqn:tau-model} produces another isometric metric which is nothing but a change of zero section, see~\cite[pp.~1351--1352]{LerMan}.
\end{remark}

\begin{example}\label{ex:BCV}
In the case of the homogeneous spaces $\mathbb{E}(\kappa,\tau)$ and $\mathbb{L}(\kappa,\tau)$ with isometry group of dimension $4$, we can choose $\lambda_1=\lambda_2$ as the conformal factor $\lambda=(1+\frac{\kappa}{4}(x^2+y^2))^{-1}$ with constant curvature $\kappa$, $\tau$ as a constant and $\mu\equiv 1$. All these functions are defined over the disk $\Omega$ given by the inequality $1+\frac{\kappa}{4}(x^2+y^2)>0$. Then, it is easy to check that~\eqref{eqn:calabi-ab} gives $\mathbf{C}=\tau\lambda$, whence $a=-\epsilon\tau y$ and $b=\epsilon\tau x$. Plugging these functions in~\eqref{eqn:ambient-metric}, we easily recover the classical Cartan model~\cite[\S296]{Cartan} for $\mathbb{E}(\kappa,\tau)$ and $\mathbb{L}(\kappa,\tau)$, see also~\cite{Daniel,Lee11a}.
\end{example}

\begin{example}
Let $A$ be a $2\times 2$ real matrix. We can define the semidirect product $\mathbb{R}^2\ltimes_A\mathbb{R}$ as $\mathbb{R}^3$ endowed with the Lie group structure
\[(p_1,z_1)\star(p_2,z_2)=(p_1+e^{z_1A}p_2,z_1+z_2),\quad (p_1,z_1),(p_2,z_2)\in\mathbb{R}^2\times\mathbb{R},\]
where $e^{zA}=\sum_{k=0}^\infty\frac{z^kA^k}{k!}$ denotes the exponential matrix. Up to isometry, a left-invariant metric in $\mathbb{R}^2\ltimes_A\mathbb{R}$ can be chosen such that the left-invariant frame
\begin{align*}
 E_1&=\alpha_{11}(z)\partial_x+\alpha_{21}(z)\partial_y,&E_2&=\alpha_{12}(z)\partial_x+\alpha_{22}(z)\partial_y,&E_3&=\partial_z,
\end{align*}
is orthonormal (here $\alpha_{ij}(z)$ denote the entries of $e^{zA}$ and $(x,y,z)$ represents the usual coordinates in $\mathbb{R}^3$, see~\cite{MeeksPerez} for a detailed description of these homogeneous spaces). This implies that $\partial_x$ is a Killing field and $\pi(x,y,z)=(y,z)$ is a Killing submersion. The metric can be expressed as
\[\frac{1}{\alpha_{22}^2+\alpha_{21}^2}\, \df y^2+\df z^2+\frac{\alpha_{22}^2+\alpha_{21}^2}{(\alpha_{11}\alpha_{22}-\alpha_{12}\alpha_{21})^2}\left(\df x-\frac{\alpha_{11}\alpha_{21}+\alpha_{12}\alpha_{22}}{\alpha_{22}^2+\alpha_{21}^2}\,\df y\right)^2.\]
By comparing with~\eqref{eqn:ambient-metric}, we can easily identify the functions $\lambda_1,\lambda_2,a,b,\mu$, so that Equation~\eqref{eqn:tau-model} gives the value of the bundle curvature:
\[2\tau=\frac{\alpha_{22}^2+\alpha_{21}^2}{\alpha_{11}\alpha_{22}-\alpha_{12}\alpha_{21}}\left(\frac{\alpha_{11}\alpha_{21}+\alpha_{12}\alpha_{22}}{\alpha_{22}^2+\alpha_{21}^2}\right)_z.\]
This formula was given in~\cite[Ex.~2.4]{LerMan} with a mistake, which is now fixed.
\end{example}

We will now describe how to compute the mean curvature of a graph over an open subset $U\subset M$ in coordinates. We can choose the zero section $F_0:U\to\mathbb{E}$ as $F_0(x,y)=(x,y,0)$, so a graph parametrized by $u\in\mathcal{C}^\infty(U)$ can be expressed as $F_u(x,y)=(x,y,u(x,y))$. This also gives rise to the distance along vertical fibers $d(x,y,z)=z$. Taking into account~\eqref{eqn:frame}, we can work out the gradient
\[\overline\nabla d=E_1(z)E_1+E_2(z)E_2+\epsilon E_3(z)E_3=aE_1+bE_2+\frac{\epsilon}{\mu}E_3,\]
so that $Z=\pi_*(\overline\nabla d)=ae_1+be_2$ and~\eqref{eqn:tau-model} yields
\begin{equation}\label{eqn:JZ}
\diver(JZ)=\diver(-be_1+ae_2)=\frac{-1}{\lambda_1\lambda_2}\left((\lambda_2 b)_x-(\lambda_1 a)_y\right)=\frac{-2\epsilon\tau}{\mu},
\end{equation}
so that $Z$ encodes information about the bundle curvature. Note also that 
\begin{equation}\label{eqn:alpha-beta}
Gu=\alpha e_1+\beta e_2,\qquad\text{where }\alpha=\frac{u_x}{\lambda_1}-a\text{ and } \beta=\frac{u_y}{\lambda_2}-b.
\end{equation}
Therefore, the equation for the mean curvature given by Proposition~\ref{prop:mean-curvature} can be written in coordinates as
\begin{equation}\label{eqn:H}
2 H=  \frac{1}{\mu\lambda_1\lambda_2}\left[\frac{\partial}{\partial x} \left(\mu^2\frac{\lambda_2\alpha}{\omega}\right) +\frac{\partial}{\partial y}  \left(\mu^2\frac{\lambda_1\beta}{\omega}\; \right) \; \right],
\end{equation}
where $\omega=\sqrt{1+\epsilon\mu^2\|Gu\|^2}=\sqrt{1+\epsilon\mu^2(\alpha^2+\beta^2)}$ is the area element we found in~\eqref{eqn:area-element}. Notice that the spacelike condition in the Lorentzian case ($\epsilon=-1$) can be written as $\alpha^2+\beta^2<\mu^{-2}$.

The standard frame $\{\partial_x,\partial_y\}$ in $M$ can be lifted via $\pi$ to the tangent frame $\{X=\lambda_1(E_1+\mu\alpha E_3),Y=\lambda_2(E_2+\mu\beta E_3)\}$ in $\Sigma_u$, whence
\begin{align*}
\langle X,X\rangle&=\lambda_1^2(1+\epsilon\mu^2\alpha^2),&
\langle X,Y\rangle&=\epsilon\lambda_1\lambda_2\mu^2\alpha\beta,&
\langle Y,Y\rangle&=\lambda_2^2(1+\epsilon\mu^2\beta^2).
\end{align*}
Therefore, $\pi|_{\Sigma_u}:\Sigma_u\to U$ induces the following Riemannian metric in $U\subset M$:
\begin{equation}\label{eqn:induced-metric}
\lambda_1^2(1+\epsilon\mu^2\alpha^2)\df x^2+2\epsilon\lambda_1\lambda_2\mu^2\alpha\beta\df x\df y+\lambda_2^2(1+\epsilon\mu^2\beta^2)\df y^2.
\end{equation}

\subsection{The curvature tensor}
Our next goal is to compute the Riemann curvature tensor of the total space of a Killing submersion $\pi:\mathbb{E}\to M$ to understand its geometry. Since the computation is local, we will employ the coordinates we have introduced in Section~\ref{sec:graphs-coordinates}, where $\mathbb{E}$ is (locally) identified with $\Omega\times\mathbb{R}$ for some $\Omega\subseteq\mathbb{R}^2$ with the metric in~\eqref{eqn:ambient-metric} for some positive functions $\lambda_1,\lambda_2,\mu\in\mathcal{C}^\infty(\Omega)$ and arbitrary functions $a,b\in\mathcal{C}^\infty(\Omega)$. Using~\eqref{eqn:tau-model},~\eqref{eqn:bracket} and Koszul formula, we can write the Levi-Civita connection $\overline\nabla$ of $\mathbb{E}$ in the frame $\{E_1,E_2,E_3\}$ given by~\eqref{eqn:frame}:
\begin{align*}
\overline\nabla_{E_1}E_1&=-\tfrac{(\lambda_1)_y}{\lambda_1\lambda_2}E_2,&
\overline\nabla_{E_1}E_2&=\tfrac{(\lambda_1)_y}{\lambda_1\lambda_2}E_1+\epsilon\tau E_3,&
\overline\nabla_{E_1}E_3&=-\tau E_2,\\
\overline\nabla_{E_2}E_1&=\tfrac{(\lambda_2)_x}{\lambda_1\lambda_2}E_2-\epsilon\tau E_3,&
\overline\nabla_{E_2}E_2&=-\tfrac{(\lambda_2)_x}{\lambda_1\lambda_2}E_1,&
\overline\nabla_{E_2}E_3&=\tau E_1,\\
\overline\nabla_{E_3}E_1&=-\tau E_2+\tfrac{\mu_x}{\lambda_1\mu}E_3,&
\overline\nabla_{E_3}E_2&=\tau E_1+\tfrac{\mu_y}{\lambda_2\mu}E_3,&
\overline\nabla_{E_3}E_3&=-\tfrac{\epsilon\mu_x}{\lambda_1\mu}E_1-\tfrac{\epsilon\mu_y}{\lambda_2\mu}E_2.
\end{align*}
Therefore, we can work out $\overline R(X,Y)Z=\overline\nabla_X\overline\nabla_Y Z-\overline\nabla_Y\overline\nabla_XZ-\overline\nabla_{[X,Y]} Z$, the three-variable Riemann curvature tensor, over this frame to obtain
\begin{align*}
R(E_1,E_2)E_1&=-(K_M-3\epsilon\tau^2)E_2-\epsilon\langle T,E_1\rangle E_3,\\
R(E_1,E_2)E_2&=(K_M-3\epsilon\tau^2)E_1-\epsilon\langle T,E_2\rangle E_3,\\
R(E_1,E_2)E_3&=\langle T,E_1\rangle E_1+\langle T,E_2\rangle E_2,\\
R(E_1,E_3)E_1&=-\langle T,E_1\rangle E_2-(\epsilon\tau^2-a_{11})E_3,\\
R(E_1,E_3)E_2&=\langle T,E_1\rangle E_1+a_{12}E_3,\\
R(E_1,E_3)E_3&=(\tau^2-\epsilon a_{11})E_1-\epsilon a_{12}E_2,\\
R(E_2,E_3)E_1&=-\langle T,E_2\rangle E_2+a_{21} E_3,\\
R(E_2,E_3)E_2&=\langle T,E_2\rangle E_1-(\epsilon\tau^2-a_{22})E_3,\\
R(E_2,E_3)E_3&=-\epsilon a_{21}E_1+(\tau^2-\epsilon a_{22})E_2,
\end{align*}
where $T=\overline\nabla\tau+\tfrac{2\tau}{\mu}\overline\nabla\mu$ and $a_{ij}=\frac{1}{\mu}\overline{\mathrm{Hess}}(\mu)(E_i,E_j)$. Here, the Hessian is defined by $\overline{\mathrm{Hess}}(\mu)(X,Y)=X(Y(\mu))-(\overline\nabla_XY)(\mu)$ for all vector fields $X$ and $Y$ in $\mathbb{E}$. These coefficients $a_{ij}$ are explicitly given by
\begin{align*}
a_{11}&=\tfrac{1}{\mu}E_1(E_1(\mu))+\tfrac{1}{\lambda_1\mu}E_2(\lambda_1)E_2(\mu),& 
a_{12}&=\tfrac{1}{\mu}E_1(E_2(\mu))-\tfrac{1}{\lambda_1\mu}E_2(\lambda_1)E_1(\mu),\\
a_{21}&=\tfrac{1}{\mu}E_2(E_1(\mu))-\tfrac{1}{\lambda_2\mu}E_1(\lambda_2)E_2(\mu),&
a_{22}&=\tfrac{1}{\mu}E_2(E_2(\mu))+\tfrac{1}{\lambda_2\mu}E_1(\lambda_2)E_1(\mu).
\end{align*}
Recall that $a_{12}=a_{21}$ by the symmetry of the Hessian. Also in the above computations, we have introduced the Gauss curvature of $M$ given by
\[K_M=\frac{(\lambda_1)_x(\lambda_2)_x\lambda_2^2+(\lambda_1)_y(\lambda_2)_y\lambda_1^2}{\lambda_1^3\lambda_2^3}-\frac{(\lambda_2)_{xx}\lambda_2+(\lambda_1)_{yy}\lambda_1}{\lambda_1^2\lambda_2^2}.\]
The four-variable Riemann curvature tensor $\overline R(X,Y,Z,W)=\langle\overline R(X,Y)Z,W\rangle$ can be computed coordinate-freely as follows.

\begin{proposition}\label{prop:curvature}
If $X,Y,Z,W$ are vector fields in $\mathbb{E}$, then
\begin{align*}
\overline R(X,Y,Z,W)&=-\tau^2\langle X\times Y,Z\times W\rangle-(K_M-4\epsilon\tau^2)\langle X\times Y,E_3\rangle\langle Z\times W,E_3\rangle\\
&\quad+\langle X\times Y,E_3\rangle\langle Z\times W,E_3\times T\rangle+\langle Z\times W,E_3\rangle\langle X\times Y,E_3\times T\rangle\\
&\quad+\tfrac{\epsilon}{\mu}\overline{\mathrm{Hess}}(\mu)((X\times Y)\times E_3,(Z\times W)\times E_3).
\end{align*}
In particular, the sectional curvature of a spacelike plane $\Pi\subset T_p\mathbb E$ is given by
\begin{align*}
\overline K(\Pi)&=\epsilon\tau^2+(K_M-4\epsilon\tau^2)\langle n,E_3\rangle^2-2\langle n,E_3\rangle\langle n\times E_3,T\rangle\\
&\qquad-\tfrac{\epsilon}{\mu}\overline{\mathrm{Hess}}(\mu)(n\times E_3,n\times E_3),
\end{align*}
where $n\in T_p\mathbb E$ is a unit normal to $\Pi$.
\end{proposition}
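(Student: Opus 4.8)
The plan is to prove the four-variable identity by reducing it to a finite check on the orthonormal frame $\{E_1,E_2,E_3\}$, and then to deduce the sectional curvature formula as an algebraic corollary. Both sides of the claimed identity are $\mathcal C^\infty(\mathbb E)$-multilinear in $X,Y,Z,W$: the left-hand side because the Riemann tensor is a tensor, and the right-hand side because it is assembled from the metric, the fixed objects $E_3$ and $T$, the Hessian of $\mu$, and the cross product, all of which are pointwise tensorial. Moreover, both sides are antisymmetric in the pair $(X,Y)$ and in the pair $(Z,W)$ — the left-hand side by the standard symmetries of $\overline R$, and the right-hand side because it depends on $X,Y$ only through $X\times Y$ and on $Z,W$ only through $Z\times W$ — and both are invariant under exchanging the two pairs. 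Consequently each side is a symmetric bilinear form on $\Lambda^2 T_p\mathbb E$, so it suffices to verify the identity on the six components $\overline R(E_i,E_j,E_k,E_l)$ with $i<j$, $k<l$ and $(i,j)\le(k,l)$.

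First I would record the cross-product relations in the frame, which follow at once from $\langle u\times v,w\rangle=\det_{\mathbb B}(u,v,w)$ together with $\langle E_3,E_3\rangle=\epsilon$: namely $E_1\times E_2=\epsilon E_3$, $E_2\times E_3=E_1$ and $E_3\times E_1=E_2$. The left-hand side of each component is then read directly off the displayed expressions for $R(E_i,E_j)E_k$ preceding the statement, pairing with $E_l$ and remembering the factor $\epsilon$ whenever the $E_3$-component is involved. For the right-hand side the relations above make several terms vanish on specific components: for instance $(X\times Y)\times E_3=0$ whenever $X\times Y$ is proportional to $E_3$, which kills the Hessian term on the components with $X\times Y\sim E_3$, while $\langle E_3,E_3\times T\rangle=0$ controls the $T$-term. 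I would check the three \emph{diagonal} components $(12,12),(13,13),(23,23)$, which reproduce $-(K_M-3\epsilon\tau^2)$ and the combinations $-\tau^2+\epsilon a_{ii}$, and then the three \emph{mixed} components $(12,13),(12,23),(13,23)$, which are precisely where the $T$-term $\langle\,\cdot\,,E_3\times T\rangle$ and the off-diagonal Hessian entry $a_{12}$ enter and where the formula is genuinely tested.

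To obtain the sectional curvature, I would fix a unit normal $n$ to a spacelike plane $\Pi$ and an orthonormal basis $\{v_1,v_2\}$ of $\Pi$ such that $\{v_1,v_2,n\}$ is positively oriented; for a spacelike plane one has $\langle n,n\rangle=\epsilon$, and the determinant definition of the cross product gives $v_1\times v_2=\epsilon n$. Substituting $(X,Y,Z,W)=(v_1,v_2,v_2,v_1)$ into the four-variable formula, so that $X\times Y=\epsilon n$ and $Z\times W=-\epsilon n$, and using $\overline K(\Pi)=\overline R(v_1,v_2,v_2,v_1)$ together with the identity $\langle n,E_3\times T\rangle=\langle n\times E_3,T\rangle$, every term collapses to the stated expression after simplifying the factors of $\epsilon$.

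I expect the main obstacle to be purely one of bookkeeping: carrying the factor $\epsilon=\langle E_3,E_3\rangle$ uniformly through both the Riemannian and Lorentzian cases, in particular through the non-uniform relation $E_1\times E_2=\epsilon E_3$ and through the raising of the $E_3$-component of $T$. The substantive content is concentrated in the mixed components, where one must confirm that the $T$-term and the Hessian term in the coordinate-free formula produce exactly the coefficients $\langle T,E_i\rangle$ and $a_{ij}$ appearing in the listed curvature components; the diagonal components and the sectional curvature reduction are then routine.
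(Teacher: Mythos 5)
Your proposal is correct and follows essentially the same route as the paper's proof: verify the tensorial identity on the frame $\{E_1,E_2,E_3\}$ using the relations $E_1\times E_2=\epsilon E_3$, $E_2\times E_3=E_1$, $E_3\times E_1=E_2$, and then deduce the sectional curvature formula by evaluating on an orthonormal basis of $\Pi$ (the paper normalizes $u\times v=n$ rather than your $v_1\times v_2=\epsilon n$, but since the expression is bilinear in the pair, the factors of $\epsilon$ cancel either way). Your explicit reduction to six components via the symmetries of both sides, and your identification of where the $T$-term and $a_{12}$ are genuinely tested, are accurate refinements of the paper's ``straightforward computation.''
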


\begin{proof}
It suffices to check that both sides coincide on the frame $\{E_1,E_2,E_3\}$, which is a straightforward computation. It is important to notice first that $E_1\times E_2=\epsilon E_3$, $E_2\times E_3=E_1$ and $E_3\times E_1=E_2$ by definition of cross product. As for the sectional curvature, we choose an orthonormal basis $\{u,v\}$ of $\Pi$ such that $u\times v=n$ and then compute $\overline K(\Pi)=\overline R(u,v,v,u)$ taking into account that $\langle n,n\rangle=\epsilon$.
\end{proof}

\begin{remark}
The structure of the expression for $\overline{R}(X,Y,Z,W)$ is meaningful. The first summand is the curvature of a space form of constant curvature since $\langle X\times Y,Z\times W\rangle=\langle X,Z\rangle\langle Y,W\rangle-\langle Y,Z\rangle\langle X,W\rangle$. The second summand shows up in homogeneous spaces $\mathbb{E}(\kappa,\tau)$ and $\mathbb{L}(\kappa,\tau)$ with four-dimensional isometry group for the standard submersion over $\mathbb{M}^2(\kappa)$. The next two summands appear in arbitrary Killing submersion with unitary Killing vector field (see also~\cite[Lem.~5.1]{Man14}). The last summand containing the Hessian only appears if the Killing vector field has non-constant length.
\end{remark}

\section{The conformal Calabi-type duality}\label{sec:duality}

In this section, we will connect the family of entire graphs in Riemannian Killing submersions and the family of entire spacelike graphs in Lorentzian Killing submersions with the only assumption that the common base surface is simply connected. Since we need to work simultaneously with two surfaces, we will use a tilde in the Lorentzian setting, which means that the corresponding element must be computed by means of the background in Section~\ref{sec:preliminaries} with $\epsilon=-1$.

The classical Calabi duality~\cite{Calabi} relies on the fact that the minimal (resp.\ maximal) graph equation in $\mathbb{R}^3=\mathbb{E}(\mathbb{R}^2,0,1)$ (resp.\ $\mathbb{L}^3=\mathbb{L}(\mathbb{R}^2,0,1)$) is a divergence-zero equation in $\mathbb{R}^2$. In general, in a simply connected base surface $M$, the Poincaré lemma says that a divergence-zero equation $\diver(X)=0$ implies the existence of a function $f$ in $M$ such that $X=J\nabla f$, where $J$ is a $\frac{\pi}{2}$-rotation in the tangent bundle of $M$. In~\cite{Lee11a,LeeMan}, the first and third authors managed to produce divergence zero equations even if the mean curvature is not zero. In the next result, we show that there is a natural and geometric way to get rid of the mean curvature function by means of the vector field $Z$ defined in Section~\ref{sec:preliminaries}.

\begin{theorem}[Conformal duality]\label{thm:duality}
Let $M$ be a simply connected Riemannian surface and let $\tau,H,\mu\in\mathcal{C}^\infty(M)$ be arbitrary functions such that $\mu>0$. There is a bijective correspondence between
\begin{enumerate}[label=\emph{(\alph*)}]
  \item entire graphs in $\mathbb{E}(M,\tau,\mu)$ with prescribed mean curvature $H$, and
  \item entire graphs in $\mathbb{L}(M,H,\mu^{-1})$ with prescribed mean curvature $\tau$.
\end{enumerate}
Assume that $\Sigma\subset \mathbb{E}(M,\tau,\mu)$ and $\widetilde\Sigma\subset\mathbb{L}(M,H,\mu^{-1})$ are such corresponding graphs.
\begin{enumerate}[label=\emph{(\arabic*)}]
  \item The graphs $\Sigma$ and $\widetilde\Sigma$ determine each other up to vertical translations. 
  \item The corresponding angle functions $\nu,\widetilde\nu:M\to\mathbb{R}$ satisfy $\widetilde\nu=-\nu^{-1}$.
  \item The diffeomorphism $\Phi:\Sigma\to\widetilde\Sigma$ such that $\widetilde\pi\circ\Phi=\pi$, where $\pi:\mathbb{E}(M,\tau,\mu)\to M$ and $\widetilde\pi:\mathbb{L}(M,H,\mu^{-1})\to M$ are the involved Killing submersions, is conformal with conformal factor $\Phi^*\df s_{\widetilde\Sigma}^2=\mu^{-2}\nu^2\df s_\Sigma^2$.
  
\end{enumerate}
Moreover, both families \emph{(a)} and \emph{(b)} are empty if $M$ is a topological sphere and either $\int_M\frac\tau\mu\neq 0$ or $\int_M H\mu\neq 0$.
\end{theorem}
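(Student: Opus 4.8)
The plan is to integrate the divergence-form mean curvature equation of Proposition~\ref{prop:mean-curvature} over the closed surface $M$ and read off a necessary integral condition for an entire graph to exist. Since $M$ is a topological sphere it is compact, connected and orientable with empty boundary, so the divergence theorem applies with no boundary term. I would treat the two hypotheses separately and then glue them using the bijective correspondence already established in parts (a)--(b): as families (a) and (b) are in one-to-one correspondence, it suffices to show that each of the two hypotheses forces \emph{one} of the families to be empty, whence both are.

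Suppose first that an entire graph in family (a) exists, parametrized by some $u\in\mathcal{C}^\infty(M)$; being entire, it is a global section over the whole compact surface. Since $\mathbb{E}(M,\tau,\mu)$ is Riemannian ($\epsilon=1$), the factor $\omega=\sqrt{1+\mu^2\|Gu\|^2}\geq 1$ never vanishes, so the vector field $W=\mu^2 Gu/\omega$ is a globally defined smooth vector field on $M$. Proposition~\ref{prop:mean-curvature} reads $\diver(W)=2\mu H$, so integrating over $M$ and using $\partial M=\emptyset$ gives
\[
0=\int_M\diver(W)=2\int_M\mu H,
\]
whence $\int_M H\mu=0$. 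This contradicts $\int_M H\mu\neq 0$, so under that hypothesis family (a) is empty, and then so is family (b) by the correspondence.

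For the remaining hypothesis I would run the identical argument on the dual side. An element of family (b) is an entire spacelike graph in the Lorentzian submersion $\mathbb{L}(M,H,\mu^{-1})$ with prescribed mean curvature $\tau$, so here $\epsilon=-1$ and the Killing length is $\mu^{-1}$. Proposition~\ref{prop:mean-curvature} now yields a globally defined smooth vector field $\widetilde W=\mu^{-2}G\widetilde u/\widetilde\omega$, still smooth because the spacelike condition $1-\mu^{-2}\|G\widetilde u\|^2>0$ keeps $\widetilde\omega$ positive, and satisfying $\diver(\widetilde W)=2\mu^{-1}\tau$. Integrating over the closed surface gives $\int_M\frac{\tau}{\mu}=0$, contradicting $\int_M\frac{\tau}{\mu}\neq 0$; hence family (b), and therefore family (a), is empty. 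The symmetry of the two obstructions $\int_M H\mu$ and $\int_M\frac{\tau}{\mu}$ is precisely the duality swapping $(\tau,\mu)$ with $(H,\mu^{-1})$.

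I expect no serious analytic obstacle: the whole argument is a single application of the divergence theorem on a closed orientable surface. The one point that genuinely requires care is verifying that the integrand is a \emph{globally} defined smooth vector field, which hinges on the graph being a section over all of $M$ and on the nonvanishing of $\omega$ (trivial in the Riemannian case of family (a), and guaranteed by the spacelike hypothesis in the Lorentzian case of family (b)). A minor bookkeeping check is tracking the factors $\mu$ and $\mu^{-1}$ correctly so that the two integral conditions emerge in the stated form.
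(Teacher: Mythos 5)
There is a genuine gap, and in fact two. First, your proposal proves only the final ``Moreover'' sentence of Theorem~\ref{thm:duality}: the core of the statement --- the construction of the bijective correspondence (in the paper: producing the dual function $v$ via the Poincar\'{e} lemma applied to the divergence-form equation of Proposition~\ref{prop:mean-curvature}, checking via the twin relations that the $v$-graph is spacelike with mean curvature $\tau$), together with items (1), (2) and (3) (uniqueness up to vertical translations, $\widetilde\nu=-\nu^{-1}$, and the conformal factor $\mu^{-2}\nu^2$) --- is nowhere addressed. The two divergence-theorem computations you do carry out are correct and coincide with the analytic halves of the paper's argument: integrating $\diver(\mu^2Gu/\omega)=2\mu H$ over a closed $M$ forces $\int_M H\mu=0$, and integrating the dual equation forces $\int_M\frac{\tau}{\mu}=0$.

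Second, even within the ``Moreover'' clause, your plan to ``glue'' the two cases through the bijective correspondence is circular. The correspondence is constructed only after one knows that global sections $F_0$ and $\widetilde F_0$ exist (these produce the fields $Z,\widetilde Z$ to which the Poincar\'{e} lemma is applied), and on a topological sphere with a nonzero integral this is exactly what fails: by the classification in \cite{LerMan} (see Theorem~\ref{thm:Killing-classification} and \cite[Thm.~2.9]{LerMan}), if $\int_M\frac{\tau}{\mu}\neq0$ then $\pi:\mathbb{E}(M,\tau,\mu)\to M$ is the Hopf fibration, which admits no entire sections at all; analogously, if $\int_M H\mu\neq0$ then $\widetilde\pi:\mathbb{L}(M,H,\mu^{-1})\to M$ (whose bundle curvature is $H$ and whose Killing length is $\mu^{-1}$) admits no entire sections. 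This topological ingredient cannot be replaced by another divergence argument: for instance, if $\int_M\frac{\tau}{\mu}\neq0$ but $\int_M H\mu=0$, integrating the $H$-equation for a hypothetical graph in family (a) yields $\int_M H\mu=0$, which is no contradiction, so emptiness of family (a) in this case genuinely requires the Hopf-fibration obstruction. The paper's proof accordingly kills one family topologically (no sections) and the other analytically (your divergence computation) in each of the two cases, and only afterwards, with the sphere obstructions dispensed with, assumes global sections and builds the bijection; your proposal inverts this order and invokes the correspondence precisely where its construction is unavailable.
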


\begin{proof}
If $M$ is a topological sphere and $\int_M\frac\tau\mu\neq 0$, then the Killing submersion $\pi:\mathbb{E}(M,\tau,\mu)\to M$ is the Hopf fibration~\cite[Thm~2.9]{LerMan}, which admits no entire sections. Also, there is no entire graph with prescribed mean curvature $\tau$ in $\mathbb{L}(M,H,\mu^{-1})$, because such a graph would produce a smooth field $X$ on $M$ such that $\diver(X)=\frac{\tau}{2\mu}$, whence $\int_M\frac\tau\mu=0$ by the divergence theorem. This means that both families in (a) and (b) are empty if $M$ is a topological sphere and $\int_M\frac\tau\mu\neq 0$. Analogously, both are empty if $M$ is a topological sphere and $\int_M H\mu\neq 0$.

Therefore, we can assume that there are global sections $F_0:M\to\mathbb{E}(M,\tau,\mu)$ and $\widetilde F_0:M\to \mathbb{L}(M,H,\mu^{-1})$, see~\cite[Prop.~3.3]{LerMan}. These sections produce smooth vector fields $Z,\widetilde Z\in\mathfrak{X}(M)$ such that $\diver(JZ)=\frac{-2\tau}{\mu}$ and $\diver(J\widetilde Z)=2H\mu$. Let $u\in\mathcal{C}^\infty(M)$ whose graph over the zero section $F_0$ has prescribed mean curvature $H$, that is,
\begin{equation}\label{thm:duality:eqn1}
2H\mu=\diver\left(\frac{\mu^2 Gu}{\sqrt{1+\mu^2\|Gu\|^2}}\right)=\diver(J\widetilde Z).
\end{equation}
Since $M$ is simply connected and~\eqref{thm:duality:eqn1} can be written as a divergence zero equation, the Poincaré lemma yields the existence of $v\in\mathcal{C}^\infty(M)$ such that 
\begin{equation}\label{thm:duality:eqn2}
\frac{\mu^2 Gu}{\sqrt{1+\mu^2\|Gu\|^2}}-J\widetilde Z=-J\nabla v\ \Leftrightarrow\ \frac{\mu^2 Gu}{\sqrt{1+\mu^2\|Gu\|^2}}=-J\widetilde G v,
\end{equation}
where $\widetilde Gv=\nabla v-\widetilde Z$ is the generalized gradient in $\mathbb L(M,H,\mu^{-1})$. The function $v$ is univocally determined up to an additive constant, which proves item (1) in the statement. Taking square norms in~\eqref{thm:duality:eqn2}, we find that
\begin{equation}\label{thm:duality:eqn3}
\frac{\mu^4\|Gu\|^2}{1+\mu^2\|Gu\|^2}=\|\widetilde G v\|^2\ \Leftrightarrow\ \frac{1}{1+\mu^2\|Gu\|^2}=1-\mu^{-2}\|\widetilde G v\|^2.
\end{equation}
The right-hand side in~\eqref{thm:duality:eqn3} reveals that $1-\mu^{-2}\|\widetilde Gv\|^2>0$, whence the graph defined by $v$ over the zero section $\widetilde F_0$ is spacelike. Taking into account~\eqref{eqn:angle-function} and~\eqref{thm:duality:eqn3}, we easily reach item (2). Also, we can plug~\eqref{thm:duality:eqn3} into~\eqref{thm:duality:eqn2} to get
\begin{equation}\label{thm:duality:eqn4}
\diver\left(\frac{\mu^{-2}\widetilde G v}{\sqrt{1+\mu^{-2}\|\widetilde Gv\|^2}}\right)=\diver(JGu)=\diver(J\nabla u)-\diver(JZ)=\frac{2\tau}{\mu},
\end{equation}
so the graph defined by $v$ has mean curvature $\tau$ in $\mathbb{L}(M,H,\mu^{-1})$. Likewise, we can obtain a graph in $\mathbb{E}(M,\tau,\mu)$ with mean curvature $H$ starting with a spacelike graph in $\mathbb{L}(M,H,\mu^{-1})$ with mean curvature $\tau$, so the duality is a bijection.

It remains to check item (3) to finish the proof. It suffices to check that the metrics induced by $\pi$ and $\widetilde\pi$ in $M$ differ in the desired conformal factor. Since this property is local, we will work in coordinates using the background described in Section~\ref{sec:graphs-coordinates}, where $M=(\Omega,\lambda_1^2\df x^2+\lambda_2\df y^2)$ with $\Omega\subset\mathbb{R}^2$. Equation~\eqref{eqn:alpha-beta} says that we can express $Gu=\alpha e_1+\beta e_2$ and $\widetilde Gv=\widetilde\alpha e_1+\widetilde\beta e_2$, where $\alpha=\frac{u_x}{\lambda_1}-a$, $\beta=\frac{u_y}{\lambda_2}-b$, $\widetilde\alpha=\frac{v_x}{\lambda_1}-\widetilde a$ and $\widetilde\beta=\frac{v_y}{\lambda_2}-\widetilde b$. If we consider the area elements 
\[\omega=\sqrt{1+\mu^2(\alpha^2+\beta^2)},\qquad 
\widetilde\omega=\sqrt{1-\mu^{-2}(\widetilde{\alpha}^2+\widetilde{\beta}^2)},\]
then~\eqref{thm:duality:eqn3} implies that $\omega\widetilde\omega=1$, whence~\eqref{thm:duality:eqn2} can be written in two equivalent ways:
\begin{equation}\label{eqn:twin}\left(\widetilde\alpha,\widetilde\beta\right)=\left(\frac{-\mu^2\beta}{\omega},\frac{\mu^2\alpha}{\omega}\right)\ \Leftrightarrow\ \left(\alpha,\beta\right)=\left(\frac{\widetilde\beta}{\mu^2\widetilde\omega},\frac{-\widetilde\alpha}{\mu^2\widetilde\omega}\right).
\end{equation}
These \emph{twin relations} allow us to compute
\begin{align*}
\lambda_1^2\left(1-\frac{\widetilde\alpha^2}{\mu^2}\right)&=\lambda_1^2\left(1-\frac{\mu^2\beta^2}{\omega^2}\right)=\frac{\lambda_1^2(1+\mu^2\alpha^2)}{\omega^2},\\
-\lambda_1\lambda_2\frac{\widetilde\alpha\widetilde\beta}{\mu^2}&=\frac{\lambda_1\lambda_2\mu^2\alpha\beta}{\omega^2},\\
\lambda_2^2\left(1-\frac{\widetilde\beta^2}{\mu^2}\right)&=\lambda_2^2\left(1-\frac{\mu^2\alpha^2}{\omega^2}\right)=\frac{\lambda_2^2(1+\mu^2\beta^2)}{\omega^2}.
\end{align*}
Taking into account the expression~\eqref{eqn:induced-metric} for the induced metrics in $M$, we deduce that both metrics are conformal with conformal factor $\omega^{-2}=\mu^{-2}\nu^2$.
\end{proof}

\begin{remark}
In coordinates, we only need to choose the functions $a,b,\widetilde a,\widetilde b$ giving the desired bundle curvatures (which amounts to choosing the initial section). Once this is achieved, the twin relations~\eqref{eqn:twin} actually give a first-order \textsc{ode} system
\begin{equation}\label{eqn:twin-v}\left(\widetilde\alpha,\widetilde\beta\right)=\left(\frac{-\mu^2\beta}{\omega},\frac{\mu^2\alpha}{\omega}\right)\ \Leftrightarrow\ \begin{cases}
v_x=\lambda_1\widetilde a+\frac{-\lambda_1\mu(\frac{u_y}{\lambda_2}-b)}{\sqrt{\mu^{-2}+(\frac{u_x}{\lambda_1}-a)^2+(\frac{u_y}{\lambda_2}-b)^2}},\\[3pt]
v_y=\lambda_2\widetilde b+\frac{\lambda_2\mu(\frac{u_x}{\lambda_1}-a)}{\sqrt{\mu^{-2}+(\frac{u_x}{\lambda_1}-a)^2+(\frac{u_y}{\lambda_2}-b)^2}}.
\end{cases}
\end{equation}
Equivalently,
\begin{equation}\label{eqn:twin-u}\left(\alpha,\beta\right)=\left(\frac{\widetilde\beta}{\mu^2\widetilde\omega},\frac{-\widetilde\alpha}{\mu^2\widetilde\omega}\right)\ \Leftrightarrow\ \begin{cases}
u_x=\lambda_1a+\frac{\frac{\lambda_1}{\mu}(\frac{v_y}{\lambda_2}-\widetilde b)}{\sqrt{\mu^2-(\frac{v_x}{\lambda_1}-\widetilde a)^2-(\frac{v_y}{\lambda_2}-\widetilde b)^2}},\\[3pt]
u_y=\lambda_2b+\frac{-\frac{\lambda_2}{\mu}(\frac{v_x}{\lambda_1}-\widetilde a)}{\sqrt{\mu^2-(\frac{v_x}{\lambda_1}-\widetilde a)^2-(\frac{v_y}{\lambda_2}-\widetilde b)^2}}.
\end{cases}
\end{equation}
The prescribed mean curvature $H$ or $\tau$ equation in $\mathbb{E}(M,\tau,\mu)$ or $\mathbb{L}(M,H,\mu^{-1})$, respectively, can be now thought of as the compatibility conditions for these systems. 
\begin{itemize}
	\item The classical Calabi duality~\cite{Calabi} is recovered for $a=b=\widetilde a=\widetilde b=0$ and $\mu=\lambda_1=\lambda_2=1$ (so we get the flat base surface $M=\mathbb{R}^2$).
	\item The duality in homogeneous spaces with four-dimensional isometry group is recovered for $\lambda_1=\lambda_2=(1+\tfrac{\kappa}{4}(x^2+y^2))^{-1}$, $a=-\tau y$, $b=\tau x$, $\widetilde a=H y$, $\widetilde b=-H x$, and $\mu=1$, see~\cite[Cor.~2]{Lee11a}. In this case, we have the base surface $M=\mathbb{M}^2(\kappa)$ (minus a point if $\kappa>0$) as explained in Example~\ref{ex:BCV}.
\end{itemize}
\end{remark} 

\section{Entire graphs with prescribed mean curvature in $\mathbb{L}^3$}\label{sec:L3}

Let $H\in\mathcal{C}^\infty(\mathbb{R}^2)$ be a smooth function. We would like to obtain an entire spacelike graph $z=v(x,y)$ in $\mathbb{L}^3=\mathbb{L}(\mathbb{R}^2,0,1)$ whose mean curvature at the point $(x,y,v(x,y))$ is precisely $H(x,y)$ for all $(x,y)\in\mathbb{R}^2$. By the duality in Theorem~\ref{thm:duality} (indeed, it suffices to apply the duality in the unitary case, see~\cite{LeeMan}), this is equivalent to finding an entire minimal graph in $\mathbb{R}^3_H=\mathbb{E}(\mathbb{R}^2,H,1)$. We will need a couple of lemmas to prove the existence of such an entire minimal graph, though we will need that both $H$ and $\nabla H$ are bounded in order to apply the following result.

\begin{lemma}\label{lem:halfspace}
If $H$ and $\nabla H$ are bounded, then there is no properly immersed surfaces in a connected component of $\mathbb{R}^3_H-P$, where $P$ is a vertical plane.
\end{lemma}

\begin{proof}
Using the Calabi potential (see Remark~\ref{rmk:calabi-potential}), the manifold $\mathbb{R}^3_H$ can be modeled as $\mathbb{R}^3$ endowed with the Riemannian metric
\[\df x^2+\df y^2+(\df z+y\,\mathbf{C}\df x-x\,\mathbf{C}\df y)^2,\qquad\text{where }\mathbf{C}(x,y)=2\int_0^1sH(xs,ys)\df s,\]
where we can also assume (after an \emph{a priori} rotation) that $P$ is given $y=d$ for some $d\in\mathbb{R}$. Consider the foliation by planes $P_t=\{(x,y,z)\in\mathbb{R}^3:y=t\}$, in which $P_d=P$ and each $P_t$ is flat and minimal since it projects onto a geodesic of $\mathbb{R}^2$. In particular, each leave $P_t$ is a parabolic surface. Observe that $E_1=\partial_x-y\mathbf{C}\partial_z$ and $E_3=\partial_z$ form an orthonormal tangent frame to all $P_t$ in which we can compute the second fundamental form as
\[\sigma_t\equiv\begin{pmatrix}
\sigma_t(E_1,E_1)&\sigma_t(E_1,E_3)\\\sigma_t(E_3,E_3)&\sigma_t(E_3,E_3)
\end{pmatrix}=\begin{pmatrix}
0&H\\H&0
\end{pmatrix}.\]
This computation is essentially the same as in~\cite[p.~1361]{LerMan} taking into account that $\mu\equiv 1$ and $P_t$ projects onto a geodesic. Therefore, $\|\sigma_t\|^2=2H^2$ is uniformly bounded not depending on $t$. Finally, consider the projection $\Phi_t:P_t\to P_0$ sending $(x,d+t,z)$ to $(x,d,z)$. Its differential $\df\Phi_t$ sends the orthonormal frame $\{E_1,E_3\}$ in $P_t$ to the frame $\{E_1-t\mathbf{C}E_3,E_3\}$ in $P_0$. However, since $H$ is bounded, so is $C$ and it trivially follows that $\Phi_t$ is a quasi-isometric projection into $P_0$ when $t$ is close to $d$. Proposition~\ref{prop:curvature} yields the following bound for the sectional curvature of $\mathbb{R}^3_H$:
\[|\overline K(\Pi)|=\left|H^2-4H^2\langle n,E_3\rangle^2-2\langle n,E_3\rangle\langle n\times E_3,\nabla H\rangle\right|\leq 3H^2+2\|\nabla H\|^2.\]
Since $H$ and $\nabla H$ are bounded, the geometry of $\mathbb{R}^3_H$ is bounded. All the hypothesis of the halfspace theorem in~\cite[Thm.~7]{Mazet} are met, so we deduce that there are no properly immersed surfaces in a connected component of $\mathbb{R}^3_H-P$.
\end{proof}

\begin{lemma}\label{lem:angle-1}
For each $r>0$, there is a minimal graph in $\mathbb{R}^3_H$ over $D_r=\{(x,y)\in\mathbb{R}^2:x^2+y^2<r^2\}$ with angle function equal to $1$ at $(0,0)$.
\end{lemma}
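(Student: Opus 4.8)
Because $\mathbb{R}^3_H=\mathbb{E}(\mathbb{R}^2,H,1)$ has $\mu\equiv 1$ and $\epsilon=1$, the angle function~\eqref{eqn:angle-function} reduces to $\nu=(1+\|Gu\|^2)^{-1/2}$, so asking for angle function $1$ at the origin is the same as asking for $Gu(0,0)=0$, i.e.\ a horizontal tangent plane there. The plan is to realize such a graph as one member of a two-parameter family of minimal graphs over $D_r$ obtained by varying linear boundary data, and to pin down the right member by a degree argument on the generalized gradient at the centre. Throughout I use that the minimal graph equation here is $\diver\!\big(Gu/\sqrt{1+\|Gu\|^2}\big)=0$ by Proposition~\ref{prop:mean-curvature}, and that $Gu=\nabla u-Z$ with $\diver(JZ)=-2H$ by~\eqref{eqn:JZ}.

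For each $w\in\mathbb{R}^2$ I would set $\ell_w(x,y)=\langle w,(x,y)\rangle$ and solve the minimal graph Dirichlet problem on $D_r$ with boundary values $\ell_w|_{\partial D_r}$. First I check this is solvable for every $w$: since $\mu\equiv 1$ the fibers are vertical geodesics and the vertical cylinder over $\partial D_r$ has mean curvature $\tfrac12\kappa_g=\tfrac1{2r}>0$ (the bundle curvature does not contribute to the trace), so $D_r$ is mean-convex and supplies boundary barriers, while the proof of Lemma~\ref{lem:halfspace} shows that $\mathbb{R}^3_H$ has bounded geometry, providing the interior gradient estimate needed to run the continuity method. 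After normalizing by the vertical translation (an isometry, which changes neither the shape nor the normal) so that the solution $u_w$ satisfies $u_w(0,0)=0$, standard elliptic estimates give continuous dependence on $w$, so $\mathcal{F}(w):=Gu_w(0,0)$ defines a continuous map $\mathbb{R}^2\to\mathbb{R}^2$. A zero of $\mathcal{F}$ is precisely the graph we want.

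It then remains to prove $0\in\mathcal{F}(\mathbb{R}^2)$, and this is where the work lies. Comparing the minimal solution $u_w$ with the tilted graph $\ell_w$, whose mean curvature $\tfrac12\diver\!\big((w-Z)/\sqrt{1+\|w-Z\|^2}\big)$ is bounded uniformly in $w$ (indeed $O(1/|w|)$ as $|w|\to\infty$, since the differential of $\xi\mapsto\xi/\sqrt{1+\|\xi\|^2}$ decays), the comparison principle yields a uniform bound $\sup_{D_r}|u_w-\ell_w|\le C$. With the normalization $u_w(0,0)=0$, as $|w|\to\infty$ with $w/|w|\to e$ the graphs pass through the origin with slope tending to $+\infty$ in the direction $e$; invoking the bounded geometry of $\mathbb{R}^3_H$ together with the compactness results for minimal graphs extended from~\cite{DMN}, I expect these surfaces to converge locally to a vertical minimal surface whose unit normal at the origin is horizontal and directed along $e$. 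This gives $\mathcal{F}(w)/|\mathcal{F}(w)|\to e$, hence $\langle\mathcal{F}(w),w\rangle>0$ on a sufficiently large circle $|w|=R$. The homotopy $(1-s)\mathcal{F}(w)+sw$ then never vanishes on $\partial B_R$, so $\mathcal{F}|_{\partial B_R}$ has degree $1$ and $\mathcal{F}$ must vanish at some $w_0\in B_R$. The graph $u_{w_0}$ is a minimal graph over $D_r$ with $Gu_{w_0}(0,0)=0$, i.e.\ angle function $1$ at the origin.

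The main obstacle is clearly the directional statement in the previous paragraph: the minimal graph operator degenerates as the tilt $|w|$ grows, so the uniform bound on $u_w-\ell_w$ controls heights but not the direction of the normal at the centre, which must instead be extracted from a genuine compactness/limit argument identifying the blow-down limit as a vertical minimal surface with the expected orientation. By contrast, solvability of the Dirichlet problems, their continuous dependence on $w$, and the final degree computation should be routine once this directional control is secured.
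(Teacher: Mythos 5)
Your overall strategy coincides with the paper's: build a two-parameter family of minimal Dirichlet solutions over $D_r$ with increasingly tilted boundary data, record the generalized gradient (equivalently, the unit normal) at the origin, and conclude by a degree argument. The difference is that you parametrize by $w\in\mathbb{R}^2$ with linear data $\ell_w$, whereas the paper parametrizes by the closed hemisphere $\overline{\mathbb{S}}{}^2_+$, assigning the piecewise constant values $\pm(\varphi_3^{-2}-1)$ on the half-circles $S_\varphi^\pm$ and filling in the equator with vertical planes, so that the normal map $\eta$ is a continuous self-map of a closed disk which is the identity on the boundary. However, as written your proof has a genuine gap, and it is exactly the step you flag yourself: the claim that, as $|w|\to\infty$ with $w/|w|\to e$, the graphs $\Sigma_{u_w}$ converge near the origin to a vertical minimal plane with horizontal normal, so that $\mathcal{F}(w)/\|\mathcal{F}(w)\|\to e$. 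Without this the circle $|w|=R$ on which $\langle\mathcal{F}(w),w\rangle>0$ need not exist and the degree computation collapses. Your route to it rests on two unproved ingredients: (i) the uniform bound $\sup_{D_r}|u_w-\ell_w|\le C$, which does not follow from invoking ``the comparison principle'' alone, since $\ell_w$ is not minimal; one must construct genuine super/subsolutions (something like $\ell_w\pm A(r^2-x^2-y^2)$ for large $|w|$) and verify them against an operator that degenerates as the tilt grows; and (ii) the passage from a height bound to convergence of the normal at a point, which requires curvature estimates for the stable graphs $\Sigma_{u_w}$ and an identification of the subsequential smooth limit as a piece of the minimal vertical plane over $\{\langle\cdot\,,e\rangle=0\}$. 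You state you ``expect'' this convergence; it is precisely the content that must be proved.

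The paper supplies exactly this missing mechanism: each solution $\Sigma_\varphi$ is squeezed between the two Jenkins--Serrin graphs over the half-disks cut off by the diameter $V_\varphi$, with finite data $\pm(\varphi_3^{-2}-1)$ on $S_\varphi^\pm$ and data $\mp\infty$ on $V_\varphi$ (existence by \cite[Thm.~6.4]{DMN}, comparison by \cite[Thm.~6.8]{DMN}); as $\varphi_3\to 0$ these barriers force convergence to the vertical plane $V_\varphi\times\mathbb{R}$, which is what makes $\eta$ continuous up to the equator where it equals the identity. Some analogue of this barrier argument (or of the compactness argument in the proof of Theorem~\ref{thm:L3}) is what your proof needs to be complete. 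Two further, fixable, points: the solvability and uniqueness of your Dirichlet problems is immediate from \cite[Thm.~4.4]{DMN}, with no need for a continuity method; and you should not justify estimates by the bounded geometry established in the proof of Lemma~\ref{lem:halfspace}, since that uses the hypotheses that $H$ and $\nabla H$ are bounded, which Lemma~\ref{lem:angle-1} does not assume --- what is actually available, and sufficient, is that the geometry of $\overline{D}_r\times\mathbb{R}$ is bounded because $\overline{D}_r$ is compact.
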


\begin{proof}
Let $\mathbb{S}^2_+=\{\varphi\in\mathbb{R}^3:\varphi_1^2+\varphi_2^2+\varphi_3^2=1,\ \varphi_3> 0\}$ be the open upper halfsphere in $\mathbb{R}^3$. For each $\varphi\in\mathbb{S}^2_+$ with $(\varphi_1,\varphi_2)\neq(0,0)$, decompose $\partial D_r=S_\varphi^+\cup S_\varphi^-$, where 
\begin{align*}
S_\varphi^+&=\{(x,y)\in\partial D_r:\langle(x,y),(\varphi_1,\varphi_2)\rangle>0\},\\ 
S_\varphi^-&=\{(x,y)\in\partial D_r:\langle(x,y),(\varphi_1,\varphi_2)\rangle<0\},
\end{align*}
and consider the boundary data in $\partial D_r$ that assigns a value $\pm(\varphi_3^{-2}-1)$ to the component $S_\varphi^\pm$, see Figure~\ref{fig:degree}. If $\varphi_1=\varphi_2=0$, the value $0$ is assigned to all $\partial D_r$. Let $\Sigma_\varphi\subset\mathbb{R}^3_H$ be the minimal graph over $\overline D_r$ that solves the Dirichlet problem for such boundary data. Note that such a minimal surface exists and is unique by~\cite[Thm.~4.4]{DMN}. The uniqueness also guarantees that $\Sigma_\varphi$ depends continuously on $\varphi$ since the boundary data we have defined in turn depend continuously on $\varphi$. Additionally, we define $\Sigma_\varphi\subset\mathbb{R}^3_H$ as the minimal vertical plane with normal $\varphi_1\partial_x+\varphi_2\partial_y$ at the origin whenever $\varphi_1^2+\varphi_2^2=1$ and $\varphi_3=0$. Recall that $\{\partial_x,\partial_y,\partial_z\}$ is an orthonormal basis of $\mathbb{R}_H^3$ at the origin in our model.

\begin{figure}
\includegraphics[width=0.6\textwidth]{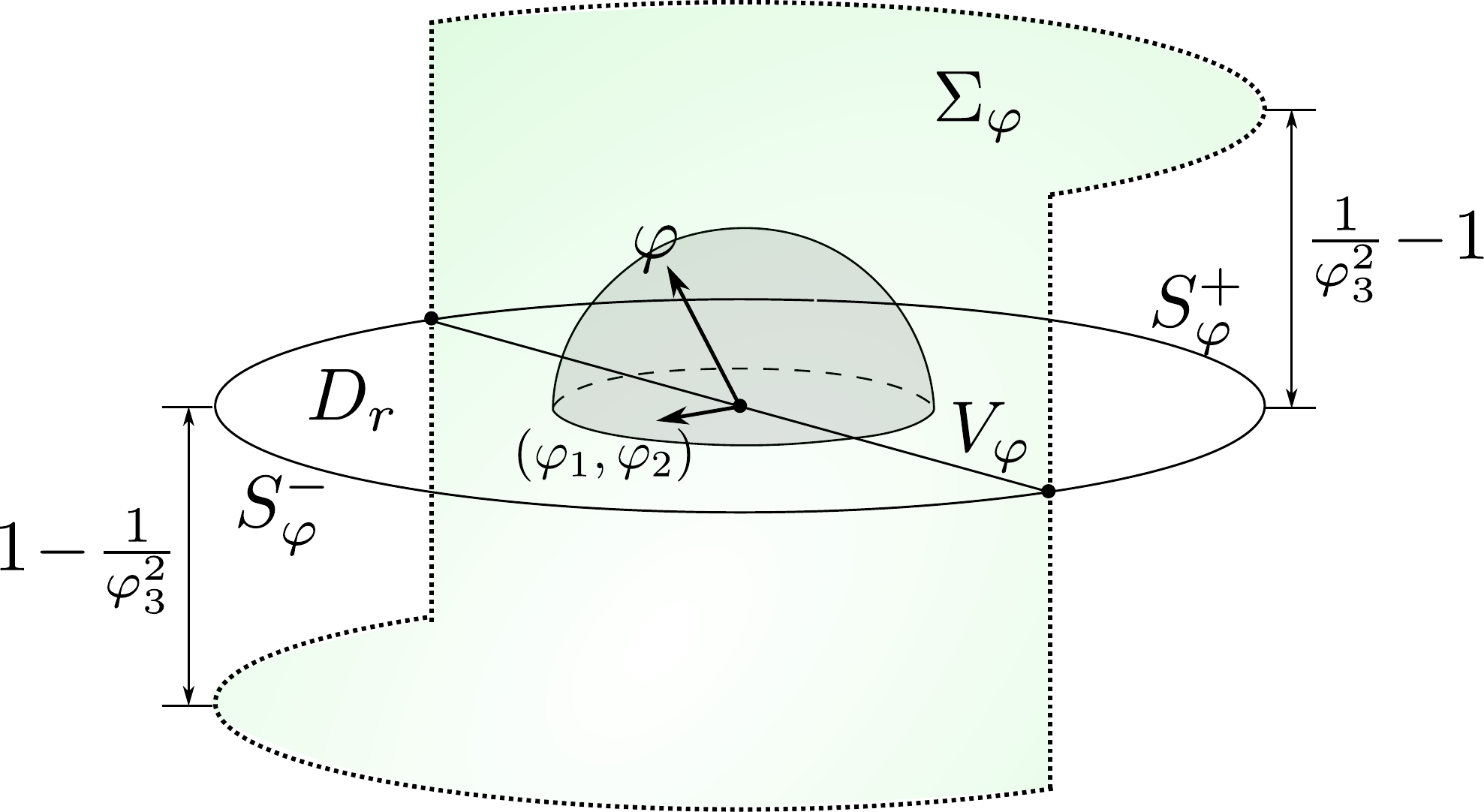}
\caption{The green surface $\Sigma_\varphi$ that solves the Dirichlet problem over $D_r$ with boundary values $\pm(\varphi_3^{-2}-1)$ on the half-circles $S_\varphi^{\pm}$.}\label{fig:degree}
\end{figure}

This allows us to define a map $\eta:\overline{\mathbb{S}}{}^2_+\to\overline{\mathbb{S}}{}^2_+$ in the closed upper hemisphere such that $\eta(\varphi)=\psi$ if the unit normal of $\Sigma_\varphi$ is expressed as $\psi_1\partial_x+\psi_2\partial_y+\psi_3\partial_z$. This map is continuous on $\mathbb S^2_+$, the interior of the hemisphere, by the continuity of $\Sigma_\varphi$ with respect to $\varphi$, but it is also continuous in the closure. To see this, for each $\varphi\in\mathbb{S}^2_+$ with $\varphi_3\neq 1$, let $V_\varphi$ the diameter of $D_r$ joining the endpoints of the arc $S_\varphi^+$ and let $\Sigma_\varphi^\pm\subset\mathbb{R}^3_H$ be the minimal graph that solves the Jenkins--Serrin problem over the halfdisk demarcated by $S_\varphi^\pm$ and $V_\varphi$ with boundary values $\pm(\varphi_3^{-2}-1)$ on $S_\varphi^\pm$ and $\mp\infty$ on $V_\varphi$, which exists by~\cite[Thm.~6.4]{DMN}. By the maximum principle (see~\cite[Thm.~6.8]{DMN}), the surface $\Sigma_\varphi$ lies below $\Sigma_\varphi^+$ and above $\Sigma_\varphi^-$ as graphs.

Given $\varphi_0\in\partial\mathbb{S}^2_+$, the radial limit of $\Sigma_\varphi$ as $\varphi\to\varphi_0$ is the vertical plane $\Sigma_{\varphi_0}=V_\varphi\times\mathbb{R}$ because $S_\varphi^+$ and $S_\varphi^-$ sweep out the whole region outside this plane as $\varphi\to\varphi_0$ radially ($V_\varphi$ does not change in the radial limit). This also means that $\eta(\varphi_0)=\varphi_0$. Since the radial limit of $\eta$ is continuous in all $\partial\mathbb{S}^2_+$, we infer that $\eta:\overline{\mathbb{S}}{}^2_+\to\overline{\mathbb{S}}{}^2_+$ is continuous. Since $\eta(\varphi)=\varphi$ for all $\varphi\in\partial\mathbb{S}^2_+$, an easy degree argument shows that $\eta$ is onto, whence there is some $\varphi_0\in\mathbb{S}^2_+$ such that $\eta(\varphi_0)=(0,0,1)$ so that $\Sigma_{\varphi_0}$ is the desired minimal graph over $D_r$.
\end{proof}

\begin{theorem}\label{thm:L3}
If $H\in\mathcal{C}^\infty(\mathbb{R}^2)$ is a bounded function such that $\nabla H$ is also bounded, then there is an entire spacelike graph in $\mathbb{L}^3$ with prescribed mean curvature $H$.
\end{theorem}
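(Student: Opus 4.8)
The plan is to use the duality to move the problem to the Riemannian side and then build the desired surface as a limit of minimal graphs over expanding disks, ruling out degeneration by means of the halfspace theorem. By Theorem~\ref{thm:duality} (it suffices to invoke the unitary case from~\cite{LeeMan}), producing an entire spacelike graph in $\mathbb{L}^3=\mathbb{L}(\mathbb{R}^2,0,1)$ with mean curvature $H$ is equivalent to producing an entire minimal graph in $\mathbb{R}^3_H=\mathbb{E}(\mathbb{R}^2,H,1)$, so I will construct the latter and then dualize back.

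First I would fix an increasing sequence $r_n\to\infty$ and take, for each $n$, the minimal graph $\Sigma_n$ over $D_{r_n}$ whose angle function equals $1$ at the origin, provided by Lemma~\ref{lem:angle-1}. Since the equation is invariant under the vertical translations $\phi_t$ (the metric of $\mathbb{R}^3_H$ does not depend on the fiber coordinate), I normalize each $\Sigma_n$ by a vertical translation so that it passes through $o=(0,0,0)$. The condition $\nu(o)=1$ forces the generalized gradient of the defining function $u_n$ to vanish at the origin, so that $o$ lies in the region where the $u_n$ have locally bounded gradient.

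Next I would extract a limit of the $\Sigma_n$ using the theory of divergence lines adapted to Killing submersions in~\cite{DMN} (in the spirit of Mazet--Rosenberg--Rodr\'iguez~\cite{MRR}). Because $H$ and $\nabla H$ are bounded, the computation in Lemma~\ref{lem:halfspace} gives a uniform bound on the sectional curvature of $\mathbb{R}^3_H$, so the ambient geometry is bounded; together with a diagonal argument over an exhaustion of $\mathbb{R}^2$ by relatively compact domains, this yields a subsequence for which $\mathbb{R}^2$ splits into a convergence set $\mathcal{B}$, where $u_n$ converges in $\mathcal{C}^\infty_{\mathrm{loc}}$ to a solution of the minimal graph equation, and a divergence set $\mathcal{D}$, where $|\nabla u_n|\to\infty$; the common boundary is a union of divergence lines, which in the flat base $\mathbb{R}^2$ are complete geodesics, i.e.\ straight lines. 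Since the gradient of $u_n$ vanishes at $o$, the origin belongs to $\mathcal{B}$, hence $\mathcal{B}\neq\emptyset$ and the limit graph is nonempty.

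The crux is to show $\mathcal{D}=\emptyset$, and this is exactly where the boundedness of $H$ and $\nabla H$ enters through Lemma~\ref{lem:halfspace}. Suppose $\mathcal{D}\neq\emptyset$; then there is a divergence line $\gamma$ bounding a component $\Omega\subset\mathcal{B}$, and after a rotation I may take $\gamma=\{y=d\}$ with $\Omega\subset\{y>d\}$. The limit $u_\infty$ solves the minimal graph equation on $\Omega$ and tends to $\pm\infty$ as one approaches $\gamma$, so its graph is a complete minimal surface properly immersed in the component $\pi^{-1}(\Omega)$ of $\mathbb{R}^3_H-P$, where $P=\pi^{-1}(\gamma)$ is the vertical plane over $\gamma$, contradicting Lemma~\ref{lem:halfspace}. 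Hence $\mathcal{D}=\emptyset$, $\mathcal{B}=\mathbb{R}^2$, and $u_\infty$ is an entire minimal graph in $\mathbb{R}^3_H$, which the duality converts into the sought entire spacelike graph in $\mathbb{L}^3$. The main obstacle I anticipate is the convergence step itself, namely setting up the divergence-line dichotomy when the domains are not relatively compact and verifying the structure of $\mathcal{D}$, which requires extending the results of~\cite{DMN}; once that machinery is in place, one must still check carefully that a divergence line produces a genuinely \emph{properly immersed} minimal surface inside a single connected component of the complement of a vertical plane, so that Lemma~\ref{lem:halfspace} applies.
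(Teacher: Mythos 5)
Your proposal is correct and follows essentially the same route as the paper's proof: dualize via Theorem~\ref{thm:duality} to minimal graphs in $\mathbb{R}^3_H$, use Lemma~\ref{lem:angle-1} over expanding disks, extract a limit through the divergence-line machinery of~\cite{DMN,MRR}, and kill the divergence lines with the halfspace theorem (Lemma~\ref{lem:halfspace}), which is where boundedness of $H$ and $\nabla H$ enters. The two points you flag as delicate are precisely the ones the paper resolves: it takes limits using only local geometry bounds on $D_r\times\mathbb{R}$ plus a diagonal argument over an exhaustion (rather than global bounded geometry), and it refines subsequences so the divergence lines are pairwise disjoint, leaving at most two parallel ones, so the limit domain is a plane, halfplane or strip and the graph is properly immersed in one component of the complement of a vertical plane.
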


\begin{proof}
For each $n\in\mathbb{N}$, let $\Sigma_n\subset\mathbb{R}^3_H$ be a minimal graph over the disk $D_n$ passing through the origin with angle function $1$ given by Lemma~\ref{lem:angle-1}. For any $r>0$, since $D_r$ is relatively compact, the geometry of $D_r\times\mathbb{R}\subset\mathbb{R}^3_H$ is uniformly bounded and the surfaces $\Sigma_n\cap(D_r\times\mathbb{R})$ are graphs over $D_r$, so they are stable and their second fundamental forms are uniformly bounded for $n\geq r+1$. Since they have the origin as accumulation point, we can find a subsequence $\Sigma_n'$ of $\Sigma_n$ that converges to a minimal surface $\Sigma_{r,\infty}$ containing the origin. This surface $\Sigma_{r,\infty}$ is either a minimal graph over a neighborhood of the origin of $\mathbb{R}^2$ or a vertical plane, but the latter can be discarded since the angle functions are constant $1$ at the origin. If $\Sigma_{r,\infty}$ is not a graph over all $D_r$ is because the sequence $\Sigma_n$ has divergence lines. We recall that a divergence line is a straight segment $L\subset D_r$ connecting two points of $\partial D_r$ such that vertical translations of $\Sigma_n'$ subconverge on compact subsets to $\pi^{-1}(L)$, see~\cite[\S7]{DMN}. We will show that we can find a further subsequence of $\Sigma_n$ such that these divergence lines have no intersection in $D_r$. To this end, we start with a disk $D_\rho$ of maximal radius where the sequence $\Sigma_n$ subconverges, and assume that $\rho\leq r$, which means that we can find some divergence lines touching $\partial D_\rho$. Choose one, say $L$, and take a further subsequence of $\Sigma_n$ such that $L$ does not intersect any other divergence line inside $D_r$ (this is a standard argument, see~\cite[Prop.\ 4.4]{MRR}). Since there cannot be infinitely many disjoint tangent lines to $\partial D_\rho$, we can proceed likewise with the rest of them to ensure that they do not intersect any other divergence line of the subsequence. By successively enlarging the radius $\rho$, we can continue meeting new divergence lines, in which case we apply the same reasoning. Since there are countably many such steps (each divergence line removes an open subset of $\partial D_r$), we can finally take a diagonal sequence of all the involved subsequences to get rid of all possible intersections.

Note that the divergence lines act as boundary components of a domain of convergence, in the sense that there is an open subset $\Omega\subset D_r$ containing the origin such that $\partial\Omega\cap D_r$ is a union of disjoint divergence lines, and the surfaces $\Sigma_n$ subconverge uniformly on compact subsets of $\Omega$ to a minimal graph over $\Omega$ while they diverge to $\pm\infty$ along the (disjoint) components of $\partial\Omega\cap D_r$.

Given $r'>r$, we can use the above argument to find another minimal graph $\Sigma_{r',\infty}$, but this can be achieved by starting with the subsequence that already converges to $\Sigma_{r,\infty}$ instead of the original sequence $\Sigma_n$. Doing so, the new set of divergence lines contains the divergence lines obtained so far. This means that, if the divergence segments for $\Sigma_r$ intersect outside $D_r$, then we can apply this reasoning for some $r'>r$ such that the intersection occurs in $D_{r'}$, and then pass to a subsequence that eliminates one of the divergence lines. All in all, the subsequence can be chosen such that there are at most two parallel divergence segments. 

This process can be done for an increasing sequence of radii $r_n\to\infty$ to obtain minimal graphs $\Sigma_{r_n,\infty}$, each of them by further refining the subsequence that converges to the previous one, so that $\Sigma_{r_n,\infty}\subset\Sigma_{r_{n+1},\infty}$ for all $n$. A diagonal argument implies that there is a complete minimal graph $\Sigma_\infty$ with angle function $1$ at the origin over all the plane $\mathbb{R}^2$ or a halfplane or a strip, depending on whether there are $0$, $1$ or $2$ divergence lines, respectively. If $H$ and $\nabla H$ are bounded, the last two cases can be discarded by Lemma~\ref{lem:halfspace}, so the dual entire spacelike graph in $\mathbb{L}^3$ has prescribed mean curvature $H$.
\end{proof}

\begin{remark}\label{rmk:prescribed-normal}
The same argument shows that there is always an entire minimal graph in $\mathbb{R}^3_H$ with prescribed unit normal at some fixed $p\in\mathbb{R}^2$. 

To see this, note that the (upward-pointing) unit normal of the minimal graph in $\mathbb{R}^3_H$ is given by $N=-\frac{\alpha}{\omega}E_1-\frac{\beta}{\omega}E_2+\frac{1}{\omega}E_3$, so that we can prescribe $\alpha$ and $\beta$ at $p$ by choosing all the elements of the convergent sequence with this normal at $p$ (the map $\eta$ in Lemma~\ref{lem:angle-1} is a bijection). By the twin relations~\eqref{eqn:twin}, this means that we can prescribe the (timelike) unit normal of the dual graph in $\mathbb{L}^3$ since it is given by $\widetilde N=\frac{\widetilde\alpha}{\widetilde\omega}\widetilde E_1+\frac{\widetilde\beta}{\widetilde\omega}\widetilde E_2+\frac{1}{\widetilde\omega}\widetilde E_3=-\beta\widetilde{E}_1+\alpha\widetilde{E}_2+\omega\widetilde{E}_3$.
\end{remark}

\begin{remark}
This technique works in any Killing submersion $\mathbb{E}(M,\tau,\mu)$ provided that there is an exhaustion of $M$ by disks whose boundaries are convex in the conformal metric $\mu^2\df s_M^2$. For instance, it works in a unit Killing submersion over a Hadamard surface or in $\mathrm{Sol}_3=\mathbb{E}(\mathbb{H}^2,0,x^2)$ (this is the metric given by Nguyen~\cite{Nguyen} in her solution to the Jenkins--Serrin problem). 

The point is that, if not an entire graph, the domain of the constructed complete graph is bounded by disjoint geodesics (in the conformal metric $\mu^2\df s_M^2$, see~\cite[Lem.~3.6]{DMN}). The hypothesis of Theorem~\ref{thm:L3} on $H$ are just the hypothesis of Lemma~\ref{lem:halfspace}, so an improved halfspace theorem with respect to vertical planes in Killing submersions would allow us to show the existence of solutions to the prescribed mean curvature equation under milder hypothesis. As a matter of fact, we believe that all hypothesis on $H$ and $\nabla H$ can be dropped, as it happens in the rotational and translational cases, which are analyzed next in the more general setting of warped products (the space $\mathbb{L}^3$ is recovered for $\lambda\equiv\mu\equiv 1$).
\end{remark}

\begin{proposition}\label{prop:rotations-translations}
The Lorentzian warped product $\mathbb{L}(M,0,\mu)$, where we consider the base surface $M=(\Omega,\lambda^2(\df x^2+\df y^2))$, admits an entire spacelike graph with prescribed mean curvature $H$ under any of the following two assumptions:
\begin{enumerate}[label=\emph{(\alph*)}]
  \item $\Omega\subseteq\mathbb{R}^2$ is a disk centered at the origin with radius $0<R\leq+\infty$ and $\lambda,\mu,H\in\mathcal{C}^\infty(\Omega)$ are radial functions (such that $\lambda,\mu>0$). 
  \item $\Omega\subseteq\mathbb{R}^2$ is a strip of width $0<R\leq+\infty$ and $\lambda,\mu,H\in\mathcal{C}^\infty(\Omega)$ are functions invariant by translations along the strip (such that $\lambda,\mu>0$). 
\end{enumerate}
\end{proposition}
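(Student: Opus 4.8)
The plan is to use the duality of Theorem~\ref{thm:duality} to trade the Lorentzian prescribed mean curvature problem for a minimal graph problem in a Riemannian Killing submersion, where the symmetry lets us write down an explicit solution. Matching the data, an entire spacelike graph with mean curvature $H$ in $\mathbb{L}(M,0,\mu)$ corresponds bijectively to an entire minimal graph in $\mathbb{E}(M,H,\mu^{-1})$, whose Killing length is $\mu^{-1}$ and whose bundle curvature is $H$. Since in both cases $M$ is a disk or a strip, it is simply connected and not a topological sphere, so the duality applies with no obstruction and the dual graph is automatically entire and spacelike by items (1)--(3) of Theorem~\ref{thm:duality}. Thus it suffices to produce an entire minimal graph in $\mathbb{E}(M,H,\mu^{-1})$.

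The crucial point is to fix the zero section (equivalently, the functions $a,b$ in~\eqref{eqn:tau-model}, i.e.\ the Calabi potential of Remark~\ref{rmk:calabi-potential}) in a way adapted to the symmetry. In case (a), I would use polar coordinates $(r,\theta)$ so that the base metric reads $\lambda^2(\df r^2+r^2\df\theta^2)$ and solve~\eqref{eqn:tau-model} for $H$ with vanishing radial component of $Z$, leaving $Z=b(r)\,e_\theta$ with $b$ determined by $(\lambda r b)_r=2H\mu\lambda^2 r$; this is precisely the gauge produced by~\eqref{eqn:calabi-ab}. In case (b), I would use coordinates $(x,y)$ with $x$ along the strip and metric $\lambda^2(\df x^2+\df y^2)$, and solve~\eqref{eqn:tau-model} with vanishing $y$-component of $Z$, leaving $Z=a(y)\,e_x$ with $(\lambda a)_y=-2H\mu\lambda^2$. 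In either case $Z$ points along the symmetry direction with coefficient depending only on the transverse variable.

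With such a choice, the zero section $u\equiv 0$ is already minimal. Its generalized gradient is $Gu=-Z$, which points along the symmetry direction, so in the mean curvature operator~\eqref{eqn:H} the only surviving term is a derivative \emph{along} the symmetry direction of a symmetry-invariant quantity, hence it vanishes. Concretely, in case (a) one has $\alpha=0$ and $\beta=-b(r)$, so~\eqref{eqn:H} reduces to $\partial_\theta$ of a radial function; in case (b) one has $\alpha=-a(y)$ and $\beta=0$, so~\eqref{eqn:H} reduces to $\partial_x$ of a function of $y$. Either way the mean curvature is zero, so the zero section is an entire minimal graph in $\mathbb{E}(M,H,\mu^{-1})$.

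The one genuinely technical point, which I regard as the main obstacle, is to confirm that this zero section is a smooth global object, in particular at the centre of the disk in case (a): one must check that $b(r)=\frac{1}{\lambda r}\int_0^r 2H\mu\lambda^2\rho\,\df\rho$ extends smoothly across $r=0$ and that $Z=b(r)\,e_\theta$ is a smooth vector field there. Using that $H,\mu,\lambda$ are smooth radial functions, the radial primitive equals $r^2$ times a smooth function of $r^2$, so $Z=\frac{g(r^2)}{\lambda^2}(-y\,\partial_x+x\,\partial_y)$ for some smooth $g$, which is smooth at the origin; case (b) requires no such check. Granting this, applying Theorem~\ref{thm:duality} yields the desired entire spacelike graph with prescribed mean curvature $H$ in $\mathbb{L}(M,0,\mu)$, with no hypothesis on the growth of $\lambda,\mu,H$. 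The conceptual content is exactly that the seemingly hard nonlinear prescribed-curvature \textsc{ode} in the Lorentzian setting collapses, after dualizing and choosing the symmetry-adapted gauge, to the triviality that the zero section is minimal.
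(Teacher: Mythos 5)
Your proposal is correct and follows essentially the same route as the paper: dualize the problem to finding an entire minimal graph in $\mathbb{E}(M,H,\mu^{-1})$, choose the symmetry-adapted gauge (the Calabi-potential gauge in the rotational case, $a\equiv 0$ in the translational case), and observe that the zero section is minimal because Equation~\eqref{eqn:H} reduces to a derivative along the symmetry direction of a symmetry-invariant quantity. Your explicit smoothness check at the origin is the only additional step; the paper sidesteps it by working in Cartesian coordinates, where the Calabi-potential gauge~\eqref{eqn:calabi-ab} is manifestly smooth on the whole disk.
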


\begin{proof}
In the rotational case, consider the Riemannian space $\mathbb{E}(M,H,\mu^{-1})$ modeled as $\Omega\times\mathbb{R}$ with the metric $\lambda^2(\df x^2+\df y^2)+\mu^{-2}(\df z+y\,\mathbf{C}\,\df x-x\,\mathbf{C}\,\df y)^2$, where $\mathbf{C}$ is the Calabi potential (see Remark~\ref{rmk:calabi-potential}). It is easy to check that the graph $z=0$ is minimal in this model using Equation~\eqref{eqn:H} and the fact that $\lambda$, $H$ and $\mu$ (and hence $\mathbf{C})$ are radial functions. Therefore, the dual graph in $\mathbb L(M,0,\mu)$ is an entire spacelike graph with mean curvature function $H$.

In item (b) we will consider $\mathbb{E}(M,H,\mu^{-1})$ modeled as $\Omega\times\mathbb{R}$ with the metric 
\[\lambda(x)^2(\df x^2+\df y^2)+\frac{1}{\mu(x)^2}(\df z-f(x)\,\df y)^2,\qquad f(x)=2\int\frac{H(x)\lambda(x)^2}{\mu(x)}\df x.\]
This model is obtained by assuming that the strip runs in the direction of the $x$-axis and integrating~\eqref{eqn:tau-model} with $a\equiv 0$. Again, the graph $z=0$ is minimal by Equation~\eqref{eqn:H} and satisfies $\alpha\equiv 0$ and $\beta$ depends only on the variable $x$. As in item (a), the dual graph in $\mathbb L(M,0,\mu)$ is the desired entire spacelike graph.
\end{proof}

\section{Existence and non-existence of entire graphs}\label{sec:non}

 Given a non-compact Riemannian surface $M$ and a positive function $\mu\in\mathcal{C}^\infty(M)$, we define the \emph{Cheeger constant} of $M$ with density $\mu$ as
\begin{equation}\label{eqn:cheeger}
\mathrm{Ch}(M,\mu)=\inf\left\{\frac{\int_{\partial D}\mu}{\int_D\mu}:D\subset M\text{ open and regular}\right\} 
\geq 0.
\end{equation}
Here, an open subset $D\subset M$ is said \textit{regular} if it is relatively compact and its boundary is piecewise smooth so the quotient in~\eqref{eqn:cheeger} makes sense. Note that $\mathrm{Ch}(M,\mu)$ remains invariant when changing $\mu$ into $a\mu$ for any positive constant $a$.

\begin{theorem}\label{thm:NON}
Let $M$ be a non-compact simply-connected surface and consider an arbitrary positive function $\mu\in\mathcal{C}^\infty(M)$. 
\begin{enumerate}[label=\emph{(\alph*)}]
 \item Given $H\in \mathcal{C}^\infty(M)$ such that $\inf_M|H|>\frac{1}{2}\mathrm{Ch}(M,\mu)$, the space $\mathbb{E}(M,\tau,\mu)$ admits no entire graphs with mean curvature $H$ for any $\tau\in\mathcal{C}^\infty(M)$.
 \item Given $\tau\in\mathcal{C}^\infty(M)$ such that $\inf_M|\tau|>\frac{1}{2}\mathrm{Ch}(M,\mu)$, the space $\mathbb{L}(M,\tau,\mu^{-1})$ admits neither complete spacelike surfaces nor entire spacelike graphs.
\end{enumerate}
\end{theorem}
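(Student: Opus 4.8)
The plan is to derive everything from an integral estimate built on the divergence form of the mean curvature equation in Proposition~\ref{prop:mean-curvature}. For part (a), suppose $\Sigma\subset\mathbb{E}(M,\tau,\mu)$ (so $\epsilon=1$) is an entire graph with mean curvature $H$, parametrized by $u\in\mathcal{C}^\infty(M)$. Setting $X=\tfrac{\mu^2 Gu}{\sqrt{1+\mu^2\|Gu\|^2}}$, Proposition~\ref{prop:mean-curvature} gives $\diver(X)=2H\mu$ on all of $M$, and a one-line computation shows the pointwise \emph{strict} bound $\|X\|<\mu$. Since $\inf_M|H|>0$, the function $H$ never vanishes, so by connectedness of $M$ it has constant sign; I may assume $H>0$ (the case $H<0$ follows by replacing $X$ with $-X$). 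For any regular domain $D\subset M$, the divergence theorem then yields
\[
2\Big(\inf_M H\Big)\int_D\mu \le \int_D 2H\mu=\int_{\partial D}\langle X,\nu\rangle \le \int_{\partial D}\|X\| < \int_{\partial D}\mu,
\]
with $\nu$ the outward unit conormal. Hence $2\inf_M H<\tfrac{\int_{\partial D}\mu}{\int_D\mu}$ for every regular $D$, and taking the infimum over $D$ gives $2\inf_M|H|\le\mathrm{Ch}(M,\mu)$ by~\eqref{eqn:cheeger}, contradicting the hypothesis. Note that $\tau$ enters only through the choice of zero section, not through the estimate, so entire graphs are excluded for every $\tau$.

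For part (b) I would first dispose of the \emph{entire spacelike graphs} by the duality. By Theorem~\ref{thm:duality}, an entire spacelike graph in $\mathbb{L}(M,\tau,\mu^{-1})$ with any mean curvature $\beta$ corresponds bijectively to an entire graph in $\mathbb{E}(M,\beta,\mu)$ with mean curvature $\tau$. Since $\inf_M|\tau|>\tfrac12\mathrm{Ch}(M,\mu)$, part (a)---read with mean curvature $\tau$ and arbitrary bundle curvature $\beta$---forbids the latter for every $\beta$, so $\mathbb{L}(M,\tau,\mu^{-1})$ admits no entire spacelike graph of any mean curvature.

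The remaining and most delicate point is to upgrade this to the non-existence of arbitrary \emph{complete spacelike surfaces}, which need not be graphs. My plan is to show that completeness forces such a surface to be an entire graph. Let $S\subset\mathbb{L}(M,\tau,\mu^{-1})$ be complete and spacelike. Because the fibers of $\widetilde\pi$ are timelike while $T_pS$ is spacelike (hence free of timelike vectors), $\ker(\df\widetilde\pi)\cap T_pS=\{0\}$, so $\widetilde\pi|_S\colon S\to M$ is a local diffeomorphism. Decomposing $v\in T_pS$ into horizontal and vertical parts and using $\langle\xi,\xi\rangle<0$, one finds $\|\df\widetilde\pi(v)\|_M^2=\|v^h\|^2=\|v\|_S^2+\mu^2\langle v,\xi\rangle^2\ge\|v\|_S^2$, so $\widetilde\pi|_S$ does not decrease lengths. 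Consequently the pullback metric $(\widetilde\pi|_S)^*g_M$ dominates the induced metric $g_S$, and since $(S,g_S)$ is complete, so is $(S,(\widetilde\pi|_S)^*g_M)$ by Hopf--Rinow (closed balls of the larger metric sit inside compact balls of $g_S$). A Riemannian local isometry from a complete manifold onto a connected one is a covering map; as $M$ is simply connected, $\widetilde\pi|_S$ is a diffeomorphism, i.e.\ $S$ is an entire spacelike graph, contradicting the graph case just settled.

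The main obstacle I anticipate is precisely this last reduction: verifying that $\widetilde\pi|_S$ is a length-non-decreasing local diffeomorphism and then leveraging completeness to conclude it is a covering, hence a diffeomorphism onto the simply connected $M$. By contrast, the integral estimate of part (a) and its transfer through the duality are routine once the strict bound $\|X\|<\mu$ and the constant sign of $H$ are in hand.
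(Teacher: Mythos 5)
Your proposal is correct and takes essentially the same route as the paper: part (a) is the same Heinz-type estimate obtained by applying the divergence theorem to Proposition~\ref{prop:mean-curvature} together with the pointwise bound $\|X\|<\mu$, and part (b) reduces complete spacelike surfaces to entire spacelike graphs via the distance-non-decreasing projection and then invokes the duality of Theorem~\ref{thm:duality} to contradict part (a). The only difference is that you write out the covering-space argument for that reduction in full, whereas the paper simply cites~\cite[Lem.~4.11]{LeeMan} for it.
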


\begin{proof}
We will use a standard argument due to Heinz~\cite{He55} to get item (a). Let us argue by contradiction supposing that such an entire graph exists and it is given by $u\in\mathcal{C}^\infty(M)$ with respect to some initial section. Applying the divergence theorem to the mean curvature equation given by Proposition~\ref{prop:mean-curvature} over an open regular domain $D\subset M$ and Cauchy-Schwarz inequality, we get
\begin{align}\label{thm:NON:eqn1}
  2H_0\int_D\mu 
 \leq \int_{D}\!\!\diver\!\left(\!\!\frac{\mu^2\, Gu}{\sqrt{1+\mu^2\|Gu\|^2}}\!\right)  
 = \int_{\partial D}\mu\left\langle\!\frac{\mu\, Gu}{\sqrt{1+\mu^2\|Gu\|^2}},\eta\!\right\rangle< \int_{\partial D}\!\!\mu,
\end{align}
where $\eta$ is an outer unit conormal to $D$ along its boundary and $H_0=\inf_M(H)$. The condition $\inf|H|>\frac{1}{2}\mathrm{Ch}(M,\mu)\geq 0$ implies that $H$ has a sign. If $H_0>0$ (and hence $H>0$), since~\eqref{thm:NON:eqn1} holds for all regular domains $D$, we find that
\[\textstyle H_0=\inf_M(H)=\inf_M|H|<\tfrac{1}{2}\mathrm{Ch}(M,\mu),\]
contradicting the hypothesis in the statement. Otherwise,
we have $H_0<0$, so we change the sign of the normal in the above argument to get that $-2H_0\int_D\mu\leq\int_{\partial D}\mu$, so $-H_0=\inf_M|H|<\frac{1}{2}\mathrm{Ch}(M,\mu)$ and we get a contradiction again.

As for item (b), we will reason by contradiction again: if there is a complete spacelike surface $\widetilde\Sigma\subset\mathbb{L}(M,\tau,\mu^{-1})$, then $\widetilde\Sigma$ would be an entire graph (the proof is the same as in~\cite[Lem.~4.11]{LeeMan} since the projection $\pi|_{\widetilde\Sigma}:\widetilde\Sigma\to M$ is distance non-decreasing) so its dual surface $\Sigma\subset\mathbb E^3(M,H,\mu)$ is an entire graph, where $H$ denotes the mean curvature of $\widetilde\Sigma$. Now, $\tau$ becomes the mean curvature of $\Sigma$ and verifies $\inf_M|\tau|>\frac{1}{2}\mathrm{Ch}(M,\mu)$, in contradiction with item (a).
\end{proof}

In $\mathbb{E}(\kappa,\tau)$-spaces, the Cheeger constant (with density $\mu\equiv 1$) is given by 
\[\mathrm{Ch}(\mathbb{M}^2(\kappa),1)=\begin{cases}
\sqrt{-\kappa}&\text{if }\kappa\leq 0,\\0&\text{if }\kappa\geq 0.\end{cases}\]
Consequently, the value $\frac{1}{2}\mathrm{Ch}(M,\mu)$ given by Theorem~\ref{thm:NON} is nothing but the \emph{critical} mean curvature in $\mathbb{E}(\kappa,\tau)$-spaces. It is well known that this also reflects the dichotomy between the existence of entire $H$-graphs and the existence of compact $H$-surfaces (both types of surfaces cannot coexist by the maximum principle, with the exception of horizontal slices in $\mathbb{S}^2(\kappa)\times\mathbb{R}$). 

We will show next that this dichotomy extends to rotationally invariant Riemannian warped products by a means of a tricky application of the duality. However, in this general case, we will find another type of surface that we will call $H$-\emph{cigar} since it is a graph over a disk with asymptotic value $+\infty$ on the boundary of the disk, see Figure~\ref{fig:cigar}. It can be thought of as a half-sphere of infinite height.

\begin{figure}
\includegraphics[width=0.6\textwidth]{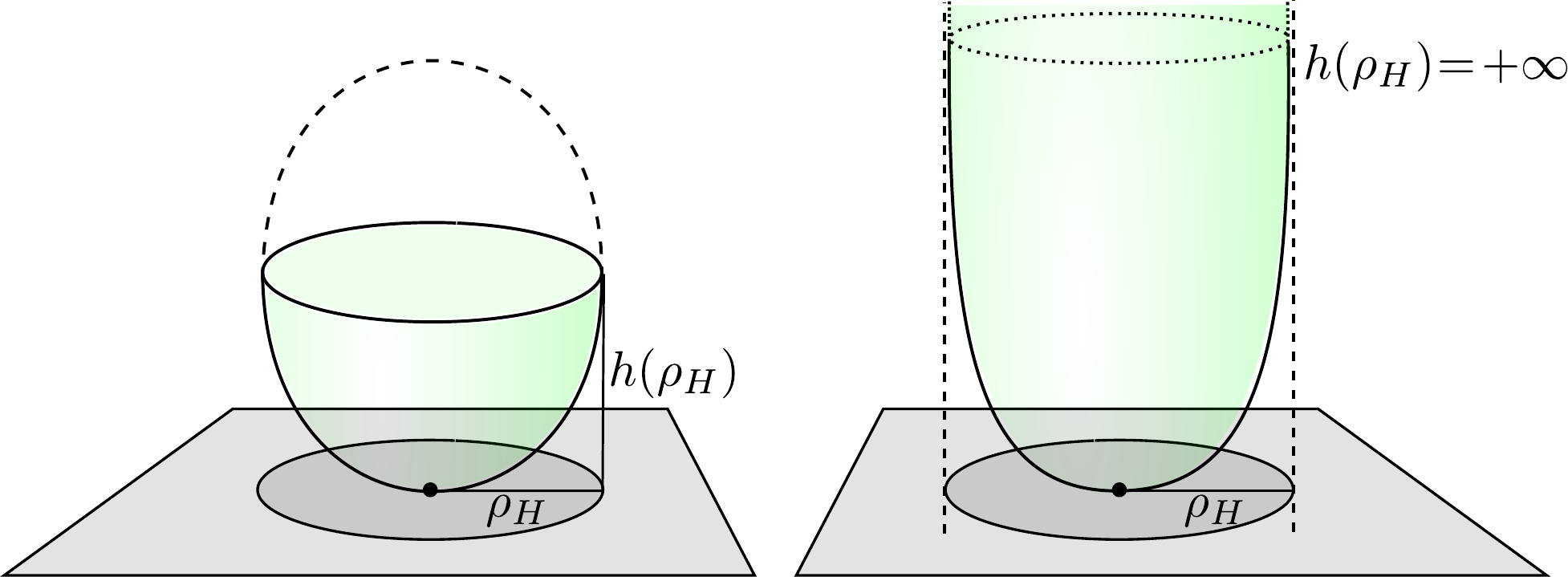}
\caption{An $H$-halfsphere (left) and an $H$-cigar (right).}\label{fig:cigar}
\end{figure}

\begin{theorem}\label{thm:rotational}
Let $\Omega\subseteq\mathbb{R}^2$ be a disk centered at the origin with radius $R\in (0,+\infty]$  and consider the rotationally invariant Riemannian surface $M=(\Omega,\lambda^2(dx^2+dy^2))$ with $\lambda,\mu\in\mathcal{C}^\infty(\Omega)$ radial functions such that $\lambda,\mu>0$. Given a constant $H\geq 0$:
\begin{enumerate}[label=\emph{(\alph*)}]
 \item If $H>\frac{1}{2}\mathrm{Ch}(M,\mu)$, then $\mathbb{E}(M,0,\mu)$ admits an embedded rotationally invariant $H$-sphere or $H$-cigar.
 \item If $H\leq\frac{1}{2}\mathrm{Ch}(M,\mu)$, then $\mathbb{E}(M,0,\mu)$ admits a rotationally invariant entire $H$-graph. 
\end{enumerate}
As a consequence, $\mathbb{E}(M,0,\mu)$ does not admit compact $H$-surfaces for $H\leq\frac{1}{2}\mathrm{Ch}(M,\mu)$, and does not admit entire $H$-graphs with $H>\frac{1}{2}\mathrm{Ch}(M,\mu)$ either.
\end{theorem}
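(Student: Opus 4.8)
The plan is to reduce the entire statement to the behaviour of one explicit dual graph. By the conformal duality (Theorem~\ref{thm:duality}), a rotationally invariant constant mean curvature $H$ graph in the Riemannian warped product $\mathbb{E}(M,0,\mu)$ corresponds to a rotationally invariant \emph{maximal} (mean curvature $0$) spacelike graph in $\mathbb{L}(M,H,\mu^{-1})$. First I would model $\mathbb{L}(M,H,\mu^{-1})$ as $\Omega\times\mathbb{R}$ with the metric~\eqref{eqn:ambient-metric} built from the Calabi potential $\mathbf{C}$ of Remark~\ref{rmk:calabi-potential} (with bundle curvature $H$ and Killing length $\mu^{-1}$), which for radial data reads $\mathbf{C}(r)=\tfrac{2H}{r^2}\int_0^r t\lambda^2\mu\,\df t$. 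The decisive observation is that the zero section $z=0$ is maximal wherever it is spacelike: its generalized gradient $\widetilde G0=-\widetilde Z=\tfrac{\mathbf{C}}{\lambda^2}(-y\,\partial_x+x\,\partial_y)$ is purely azimuthal with a radial coefficient, so the flux field $\mu^{-2}\widetilde G0/\widetilde\omega$ appearing in Proposition~\ref{prop:mean-curvature} (with $\epsilon=-1$) is divergence-free by rotational symmetry. Computing the spacelike factor and comparing $\|\widetilde G0\|^2=\mathbf{C}^2r^2/\lambda^2$ with the weighted area and perimeter of the concentric disk $D_r$ yields
\[
\widetilde\omega^2=1-\left(2H\,\frac{\int_{D_r}\mu}{\int_{\partial D_r}\mu}\right)^2,
\]
so that $z=0$ is spacelike over $D_r$ if and only if $2H<\int_{\partial D_r}\mu/\int_{D_r}\mu$. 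The problem is therefore governed by the disk ratio $f(r)=\int_{\partial D_r}\mu/\int_{D_r}\mu$.

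For part (b), suppose $H\le\frac12\mathrm{Ch}(M,\mu)$. Since concentric disks are admissible competitors in~\eqref{eqn:cheeger}, one has $f(r)\ge\mathrm{Ch}(M,\mu)\ge 2H$ for every $r$, so $z=0$ is spacelike on all of $\Omega$ and its dual is the desired rotationally invariant entire $H$-graph in $\mathbb{E}(M,0,\mu)$. In the strict range $H<\frac12\mathrm{Ch}(M,\mu)$ this inequality is strict and the argument is complete; in the borderline case $H=\frac12\mathrm{Ch}(M,\mu)$ one must additionally guarantee $f(r)>\mathrm{Ch}(M,\mu)$ for all finite $r$, i.e.\ that no finite concentric disk attains the Cheeger constant, which I expect to follow from the fact that concentric disks realize $\mathrm{Ch}(M,\mu)$ only in the limit $r\to R$.

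For part (a), suppose $H>\frac12\mathrm{Ch}(M,\mu)$. Rather than recompute the threshold, I would invoke Theorem~\ref{thm:NON}(a): as $H$ is a nonnegative constant with $\inf_M|H|=H>\frac12\mathrm{Ch}(M,\mu)$, the space $\mathbb{E}(M,0,\mu)$ admits no entire $H$-graph. Hence $z=0$ cannot be spacelike on all of $\Omega$ (its dual would otherwise be such a graph), so the maximal spacelike region about the origin is a proper subdisk $D_\rho$ with $\widetilde\omega(\rho)=0$. Dualizing over the (simply connected) disk $D_\rho$ via the twin relations~\eqref{eqn:twin-u} produces a rotationally invariant $H$-graph $u(r)$ with $u'(r)=r\mathbf{C}\lambda\big/\big(\mu\sqrt{\mu^2\lambda^2-\mathbf{C}^2r^2}\big)\to+\infty$ as $r\to\rho$, so the graph becomes vertical along $\pi^{-1}(\partial D_\rho)$ (equivalently $\nu\to 0$ through $\widetilde\nu=-\nu^{-1}$). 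The height $u(\rho^-)=\int_0^\rho u'$ is finite precisely when $\mu^2\lambda^2-\mathbf{C}^2r^2$ has a simple zero at $\rho<R$, in which case reflecting across the slice $z=u(\rho)$ (an isometry of $\mathbb{E}(M,0,\mu)$ since $\tau=0$) glues the graph smoothly, across its vertical equator, into an embedded rotationally invariant $H$-sphere; if instead the height diverges one obtains an $H$-cigar.

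The consequence then follows at once: an entire $H$-graph (part (b)) and a compact $H$-surface cannot coexist by the maximum principle, so there are no compact $H$-surfaces when $H\le\frac12\mathrm{Ch}(M,\mu)$; and the non-existence of entire $H$-graphs for $H>\frac12\mathrm{Ch}(M,\mu)$ is exactly Theorem~\ref{thm:NON}(a). The main obstacles I foresee are twofold. The first is the borderline case of part (b), which hinges on a symmetrization argument proving that concentric disks realize $\mathrm{Ch}(M,\mu)$ and never do so at a finite radius. The second is the sphere-versus-cigar dichotomy in part (a): although it reduces to the convergence of the explicit radial integral $\int_0^\rho u'$, it requires a careful case analysis of the order of vanishing of $\mu^2\lambda^2-\mathbf{C}^2r^2$ at its first zero together with the behaviour at $r=R$, and a verification that the reflected surface is genuinely smooth and embedded.
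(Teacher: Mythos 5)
Your proposal follows the paper's proof almost line by line: the paper likewise dualizes the zero section $z=0$ of $\mathbb{L}(M,H,\mu^{-1})$, expresses the spacelike condition through the concentric-disk ratio (your $f(r)=\int_{\partial D_r}\mu/\int_{D_r}\mu$ is exactly the paper's $\lambda\mu/(\rho c)$ in~\eqref{thm:rotational:eqn4}), invokes Theorem~\ref{thm:NON} to force the spacelike region to be a proper subdisk when $H>\frac{1}{2}\mathrm{Ch}(M,\mu)$, and reflects the finite-height graphs across a slice to close up the $H$-sphere. Your sphere/cigar criterion (order of vanishing of $\mu^2\lambda^2-\mathbf{C}^2r^2$ at $\rho_H$) is equivalent to the paper's dichotomy $g_2'(\rho_H)\neq 0$ versus $g_2'(\rho_H)=0$, where $g_2=r\mathbf{C}/(\lambda\mu)$, and the case analysis you defer is short: since $g_2\leq 1$ with $g_2(\rho_H)=1$, a non-simple zero forces $g_2'(\rho_H)=0$, so Taylor gives $1-g_2^2=O((\rho_H-r)^2)$ and the height integral diverges (cigar), while a simple zero produces an integrable $(\rho_H-r)^{-1/2}$ singularity (sphere). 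The paper instead proves the infinite-height alternative by a boundary maximum principle against the vertical cylinder over $\partial D_{\rho_H}$, after checking that this cylinder has constant mean curvature $H$; your direct integral estimate is more elementary and works.

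The genuine gap is the one you flag yourself: the borderline case $H=\frac{1}{2}\mathrm{Ch}(M,\mu)$ of part (b), and your proposed repair --- that concentric disks can realize $\mathrm{Ch}(M,\mu)$ only in the limit $r\to R$ --- is false in general, so this route cannot be completed. Take $\mu\equiv 1$ and a rotationally symmetric metric $\df s^2+P(s)^2\df\theta^2$ built as follows: choose a smooth $F\geq 0$ with $F(s)=1-e^{-s}$ near $s=0$, $F>0$ on $(0,s_0)$ and $F\equiv 0$ for $s\geq s_0$, and set $P(s)=e^{s}\int_0^s e^{-t}F'(t)\,\df t$; then $P=\sinh s$ near $0$, $P>0$, and $P(s)-\int_0^s P=F(s)$, so the disk ratio $f=P/\!\int_0^sP$ equals $1$ on $[s_0,\infty)$ and exceeds $1$ before. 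The paper's closing remark (which only uses Theorem~\ref{thm:NON} and is sound) then gives $\mathrm{Ch}(M,1)=\inf f=1$, attained at every finite radius $s\geq s_0$. For $H=\frac{1}{2}=\frac{1}{2}\mathrm{Ch}(M,1)$ the spacelike region of $z=0$ is exactly $\{s<s_0\}$, the zero of $1-g_2^2$ at $s_0$ is of infinite order, and your own cigar analysis yields an $H$-cigar; by the sliding maximum principle (the same one used for the ``consequence''), this excludes \emph{every} entire $H$-graph, rotational or not. So the borderline conclusion of (b) is not merely unproved --- in this generality it fails. You should know that the paper's proof is defective at exactly the same spot: it rests on the unjustified assertion that $H\mapsto\rho_H$ is continuous and that the entire-graph regime is closed ($H\leq H_0$), which is what breaks in the example above. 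In short, your proposal reproduces the paper's argument, including its one real weakness; a correct statement requires either the strict inequality $H<\frac{1}{2}\mathrm{Ch}(M,\mu)$ in (b), or the additional hypothesis that the infimum of the disk ratio is not attained at a finite radius.
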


\begin{proof}
In the sequel we will write $\mu=\mu(\rho)$ and  $\lambda=\lambda(\rho)$, where $\rho=(x^2+y^2)^{1/2}$. Consider the Lorentzian space $\mathbb L^3(M,H,\mu^{-1})$, whose Calabi potential with respect to the conformal parametrization is also radial, given by $\mathbf{C}(\rho)=2H c(\rho)$, where 
\[c:[0,R)\to\mathbb{R},\qquad c(\rho)=\int_0^1s\,\lambda(s\rho)^2\mu(s\rho)\,ds=\frac{1}{\rho^2}\int_0^\rho s\lambda(s)\mu(s)^2\df s\geq 0.\]
This means that $\mathbb L^3(M,H,\mu^{-1})$ is modeled as $\Omega\times\mathbb{R}$ with metric
\[\lambda^2(\df x^2+\df y^2)-\mu^{-2}\left(\df z-y\,\mathbf{C}\,\df x+x\,\mathbf{C}\,\df y\right)^2,\]
which follows from Equation~\eqref{eqn:ambient-metric} and Remark~\ref{rmk:calabi-potential} with $\widetilde a=\frac{2Hyc}{\lambda}$ and $\widetilde b=\frac{-2Hxc}{\lambda}$. Equation~\eqref{eqn:H} easily implies that the graph $z=0$ is maximal in $\mathbb L^3(M,H,\mu^{-1})$ and the spacelike condition~\eqref{eqn:causality} for this graph reads
\begin{equation}\label{thm:rotational:eqn1}
\mu(\rho)-\frac{2H\rho\, c(\rho)}{\lambda(\rho)}>0.
\end{equation}
Since~\eqref{thm:rotational:eqn1} holds true for $\rho=0$, it must still hold true in a neighborhood of $0$, so there is some maximal radius $\rho_H\in(0, R]$ such that~\eqref{thm:rotational:eqn1} is satisfied for $0\leq\rho<\rho_H$. Theorem~\ref{thm:duality} gives a dual $H$-graph in $\mathbb{E}(M,0,\mu)$ over the disk of radius $\rho_H$. As $\mathbb{E}(M,0,\mu)$ has zero bundle curvature, we will choose $a=b=0$ in~\eqref{eqn:ambient-metric} and model it as $\Omega\times\mathbb{R}$ with the metric $\lambda^2(\df x^2+\df y^2)+\mu^2\df z^2$. In this model we parametrize the aforesaid dual $H$-graph as $z=u(x,y)$ for some smooth function $u$ on the of radius $\rho_H$. The twin relations~\eqref{eqn:twin-u} give the derivatives of $u$:
\begin{equation}\label{thm:rotational:eqn2}
u_x=\frac{2Hxc}{\mu\sqrt{\mu^2-\frac{4H^2c^2}{\lambda^2}(x^2+y^2)}},\qquad u_y=\frac{2Hyc}{\mu\sqrt{\mu^2-\frac{4H^2c^2}{\lambda^2}(x^2+y^2)}},\end{equation}
whence $yu_x-xy_y=0$ and $u$ also defines a rotationally invariant surface in $\mathbb{E}(M,0,\mu)$. In particular, we can reparametrize the graph of $u$ as 
\[(\rho,\theta)\mapsto(\rho\sin(\theta),\rho\cos(\theta),h(\rho))\in\Omega\times\mathbb{R}\equiv\mathbb{E}(M,0,\mu),\]
where $0\leq\rho<\rho_H$ and $\theta\in\mathbb{R}$. The profile function $h$ is given by
\[h(\rho)=\int_0^\rho\frac{2H r\,c(r)\,\df r}{\mu(r)\sqrt{\mu(r)^2-\frac{4H^2r^2c(r)^2}{\lambda(r)^2}}}=\int_0^{\rho}\frac{g_1(r)\,\df r}{\sqrt{1-g_2(r)^2}},\]
where $g_1(r)=\frac{2Hrc(r)}{\mu(r)^2}$ and $g_2(r)=\frac{2Hrc(r)}{\lambda(r)\mu(r)}$ are non-negative functions defined for all $r\in[0,R)$ which only vanish at $r=0$. We will distinguish three cases:

\noindent\textbf{Case 1.} If $\rho_H=R$, then $z=0$ is an entire maximal graph so the dual surface $z=u(x,y)$ is an entire rotationally invariant $H$-graph.

\noindent\textbf{Case 2.} Assume that $\rho_H<R$ and $g_2'(\rho_H)\neq 0$. Since $\rho\mapsto h(\rho)$ is increasing and the function $\varphi=g_2'g_1^{-1}$ is continuous and bounded away from zero in a neighborhood of $\rho_H$, it follows that $h(\rho_H)<+\infty$ if and only if
\[\int_0^{\rho_H}\frac{\varphi(r)g_1(r)\,\df r}{\sqrt{1-g_2(r)^2}}=\int_0^{\rho_H}\frac{g_2'(r)\,\df r}{\sqrt{1-g_2(r)^2}}<+\infty.\]
The last integral equals $\arcsin(g_2(\rho_H))<+\infty$, so this argument shows that $h(\rho_H)<+\infty$ and the boundary of the graph $z=u(x,y)$ lies in the slice $\Omega\times\{h(\rho_H)\}$. The graph meets the slice orthogonally since $g_2(\rho_H)=1$ by the maximality of $\rho_H$, whence the angle function of $z=u(x,y)$, given by $\nu(\rho)=\mu(\rho)\sqrt{1-g_2(\rho)^2}$ tends to zero as $\rho\to\rho_H$. Moreover, the transformation $(x,y,z)\mapsto (x,y,2h(\rho_H)-z)$ is an isometry in $\mathbb{E}(M,0,\mu)$ keeping the (totally geodesic) slice $\Omega\times\{h(\rho_H)\}$ fixed, so the graph can be reflected about this slice to get an embedded $H$-sphere.

\noindent\textbf{Case 3.} Finally, assume that $\rho_H<R$ and $g_2'(\rho_H)=0$. Let $\Gamma$ be the vertical cylinder of equation $x^2+y^2=\rho_H^2$, which has constant mean curvature 
\begin{equation}\label{thm:rotational:eqn3}
\begin{aligned}
2H_\Gamma&\stackrel{(1)}{=}\frac{(\lambda\mu)'(\rho_H)}{\lambda(\rho_H)^2\mu(\rho_H)}+\frac{1}{\rho_H\lambda(\rho_H)}\stackrel{(2)}{=}\frac{1}{\rho_H\lambda(\rho_H)}+\frac{c(\rho_H)+\rho_Hc'(\rho_H)}{\rho_H\lambda(\rho_H)c(\rho_H)}\\
&\stackrel{(3)}{=}\frac{1}{\rho_H\lambda(\rho_H)}+\frac{\mu(\rho_H)\lambda^2(\rho_H)-c(\rho_H)}{\rho_H\lambda(\rho_H)c(\rho_H)}\\
&\stackrel{(4)}{=}\frac{1}{\rho_H\lambda(\rho_H)}+\frac{\mu(\rho_H)\lambda^2(\rho_H)-\frac{\lambda(\rho_H)\mu(\rho_H)}{2H\rho_H}}{\rho_H\lambda(\rho_H)\frac{\lambda(\rho_H)\mu(\rho_H)}{2H\rho_H}}=2H.
\end{aligned}\end{equation}
The equality (1) in~\eqref{thm:rotational:eqn3} to compute the mean curvature of a vertical cylinder follows from~\cite[Eq.~2.10]{DMN}; (2) uses the condition $g_2'(\rho_H)=0$, in which we solve for $(\lambda\mu)'(\rho_0)$; (3) uses the identity $\frac{\df}{\df r}(rc(r))=\mu(r)\lambda(r)^2$, which in turn follows from~\eqref{eqn:tau-model} and the fact that the bundle curvature of $\mathbb{L}(M,H,\mu^{-1})$ is $H$ (note that the Killing length is $\mu^{-1}$); finally, (4) is a consequence of the fact that $g_2(\rho_H)=1$ by the maximality of $\rho_H$. We will conclude that $h(\rho_H)=+\infty$ by contradiction. If $h(\rho_H)<+\infty$, then the $H$-graph $z=u(x,y)$ lies in the interior of the $H$-cylinder $x^2+y^2=\rho_H^2$. They are tangent along the boundary because $\nu(\rho_H)=\mu(\rho_H)\sqrt{1-g_2(\rho_H)^2}=0$ as in the above item. The boundary maximum principle for $H$-surfaces yields the desired contradiction.

Now observe that~\eqref{thm:rotational:eqn1} implies that $H\mapsto\rho_{H}$ is a continuous and decreasing function of $H$. That means that there exists $H_0\geq 0$ such that $z=u(x,y)$ defines an entire graph for $H\leq H_0$ and an $H$-halfsphere or an $H$-cigar for $H>H_0$ (depending on whether $g_2'(\rho_H)$ vanishes or not). Note that in the case $H=0$, then $u\equiv 0$ is an entire minimal graph.  Recall that entire $H$-graphs and $H$-spheres (or $H$-cigars) cannot coexist due to the maximum principle for $H$-surfaces. Hence, it remains to prove that $H_0=\frac{1}{2}\mathrm{Ch}(M,\mu)$ and we will be done. 

On the one hand, Theorem~\ref{thm:NON} yields non-existence of entire $H$-graphs for $H>\frac{1}{2}\mathrm{Ch(M,\mu)}$, so we deduce that $H_0\leq\frac{1}{2}\mathrm{Ch(M,\mu)}$. On the other hand, let $D_\rho$ be the disk of center $0$ and Euclidean radius $0<\rho<R$. By definition of Cheeger constant,
\begin{equation}\label{thm:rotational:eqn4}
\mathrm{Ch}(M,\mu)\leq\frac{\int_{\partial D_\rho}\mu}{\int_{D_\rho}\mu}=\frac{\int_0^{2\pi}\rho\,\lambda(\rho)\mu(\rho)\,\df\theta}{\int_0^{2\pi}\int_{0}^\rho r\,\lambda(r)^2\mu(r)\,\df r\,\df\theta}=\frac{\lambda(\rho)\mu(\rho)}{\rho\, c(\rho)}
\end{equation}
for all $0<\rho<R$, where we have used polar coordinates $(r,\theta)$. Given $0\leq H<\frac{1}{2}\mathrm{Ch}(M,\mu)$, the estimate~\eqref{thm:rotational:eqn4} implies that the causality condition~\eqref{thm:rotational:eqn1} holds for all $0<\rho<R$, so the above construction (Case 1) provides an entire $H$-graph for all $H<\frac{1}{2}\mathrm{Ch}(M,\mu)$. It follows that $H_0=\frac{1}{2}\mathrm{Ch(M,\mu)}$.
\end{proof}

\begin{remark}
The $H$-cigars are tangent to a vertical cylinder at infinity of equation $x^2+y^2=\rho_H^2$. This vertical cylinder (which is homogeneous as a surface of $\mathbb{E}(M,0,\mu$), has the same constant mean curvature $H$ as shown in the proof (Case $3$). One can see the $H$-cigars as solutions to a Jenkins--Serrin problem for $H$-surfaces in $\mathbb{E}(M,0,\mu)$ with just one boundary component.
\end{remark}

\begin{remark}
The proof shows indirectly that the Cheeger constant can be obtained explicitly from the radial geometric data $\lambda$ and $\mu$ as
\[\mathrm{Ch}(M,\mu)=\inf_{0<\rho<R}\frac{\rho\,\lambda(\rho)\mu(\rho)}{\int_0^\rho s\,\lambda(s)^2\mu(s)\,\df s}.\]
The inequality $\leq$ follows directly from the computations for disks $D_\rho$ in the proof of Theorem~\ref{thm:rotational}. Assume by contradiction that a strict inequality $<$ holds. In that case, there exists $H>0$ such that
\[\mathrm{Ch}(M,\mu)<2H<\inf_{0<\rho<R}\frac{\rho\,\lambda(\rho)\mu(\rho)}{\int_0^\rho s\lambda(s)^2\mu(s)\,\df s}.\]
In particular,~\eqref{thm:rotational:eqn1} is satisfied for all $0<\rho<R$, so $z=0$ in $\mathbb{L}(M,H,\mu^{-1})$ defines an entire spacelike maximal graph, and its twin graph in $\mathbb{E}(M,0,\mu)$ has constant mean curvature $H>\frac{1}{2}\mathrm{Ch}(M,\mu)$, in contradiction with item (a) of Theorem~\ref{thm:NON}.
\end{remark}

\end{document}